\tikzstyle{vertex}=[circle,fill=black!100,minimum size=6pt,inner sep=0pt]
\tikzstyle{edge} = [draw,thick,-]
\tikzstyle{dashed-edge} = [draw,thick,-, dashed]
\tikzstyle{very-thick-edge} = [draw,line width=1mm,-]
\tikzstyle{bold-edge} = [draw,line width=0.6mm,-]
\tikzstyle{dotted-edge} = [draw,thick,-, dotted]
\tikzstyle{bold-dotted-edge} = [draw,line width=0.6mm,-, dotted]
\tikzstyle{possible-path} = [edge, loosely dotted]
\tikzstyle{very-thick-possible-path} = [very-thick-edge, dotted]
\tikzstyle{color-label}=[font=\small,inner sep=0pt]
\tikzstyle{graph-label}=[font=\small,inner sep=0pt]
\def\ftnkt{\footnote{Krzysztof Turowski's research was funded in whole by Polish National Science Center $2020$/$39$/D/ST$6$/$00419$ grant. For the purpose of Open Access, the author has applied a CC-BY public copyright license to any Author Accepted Manuscript (AAM) version arising from this submission.}}
\def\ftnbw{\footnote{Corresponding author}}
\title{Edge coloring of products of signed graphs}
\keywords{signed graphs, edge coloring of signed graphs, graph products}
\begin{document}
\begin{abstract}
    In 2020, Behr defined the problem of edge coloring of signed graphs and showed that every signed graph $(G, \sigma)$ can be colored using exactly $\Delta(G)$ or $\Delta(G) + 1$ colors, where $\Delta(G)$ is the maximum degree in graph $G$.

    In this paper, we focus on products of signed graphs. We recall the definitions of the Cartesian, tensor, strong, and corona products of signed graphs and prove results for them. In particular, we show that $(1)$ the Cartesian product of $\Delta$-edge-colorable signed graphs is $\Delta$-edge-colorable, $(2)$ the tensor product of a $\Delta$-edge-colorable signed graph and a signed tree requires only $\Delta$ colors and $(3)$ the corona product of almost any two signed graphs is $\Delta$-edge-colorable. We also prove some results related to the coloring of products of signed paths and cycles.
\end{abstract}

\section{Introduction}

    This article focuses on simple, finite, and undirected graphs. A graph $G$ is defined by its vertices $V(G)$ and edges $E(G)$. Their respective quantities are denoted by $n(G)$ and $m(G)$. The degree of a vertex $v$ in $G$ is given by $\deg_G(v)$ (or sometimes just $\deg(v)$, when $G$ is unambiguous from the context), and $\Delta(G)$ indicates the maximum degree among all vertices in $G$.
    
    An \emph{incidence} consists of a vertex $v$ paired with an edge $e$, with $v$ acting as an endpoint of $e$. This pairing is denoted $v\text{:}e$, and the set of all such incidences for a graph $G$ is denoted by $I(G)$. Any other terms and definitions are consistent with those put forth by Diestel \cite{diestel}.
    
    A concept of signed graphs emerged in the $1950$s, introduced by Harary \cite{harary_signed}. They represent an extended form of simple graphs, devised to better model social relations. A \emph{signed graph} is a pair $(G, \sigma)$, where $G$ is a graph and $\sigma\colon E(G)\to\{\pm 1\}$ is a function. Here, $G$ is an \emph{underlying graph}, and $\sigma$ denotes its \emph{signature}. An underlying graph of a signed graph $S$ is sometimes also denoted by $|S|$. An edge $e\in E(G)$ is called \emph{positive} (or \emph{negative}) if $\sigma(e)=1$ (or $\sigma(e)=-1$, respectively). Cycles in $(G, \sigma)$ are called \emph{positive} or \emph{negative} based on the product of their edge signs. A signed graph having only positive cycles is \emph{balanced}. In all other cases, it is \emph{unbalanced}.
    
    In the context of a signed graph $(G, \sigma)$, switching refers to an operation on a subset $V'\subseteq V(G)$, yielding a new signed graph $(G, \sigma')$. For any given edge $uv$ in $E(G)$, the following is the result of switching:
    \begin{displaymath}
        \sigma'(uv) = 
        \begin{cases} 
            -\sigma(uv)\text{,} & \text{if } V' \text{ includes exactly one of } u\text{, }v\text{,} \\
            \sigma(uv)\text{,} & \text{otherwise.} 
        \end{cases}
    \end{displaymath}
    If a graph $S'$ can be obtained from graph $S$ by switching some vertices of $S$, we call them \emph{switching equivalent} signed graphs.

    Additionally, by \emph{decomposition} of a signed graph $S = (G, \sigma)$ we denote a sequence of signed graphs $S_1 = (G_1, \sigma_1)$, \ldots, $S_k = (G_k, \sigma_k)$ such that $E(G) = \bigcup_{i = 1}^k E(G_i)$, $E(G_i) \cap E(G_j) = \emptyset$ for any $1 \le i < j \le k$, and $\sigma_i(e) = \sigma(e)$ for all $e \in E(G_i)$ and for all $i \in \{1, \ldots, k\}$. It turns out that it is often useful to decompose signed graphs into (properly chosen) sequences of signed cycles and/or matchings. 

    There were several coloring problems on graphs considered in the literature, starting with works of Harary \cite{harary1968coloring} and Zaslavsky \cite{zaslavsky1982signed,zaslavsky1984colorful,zaslavsky1987balanced}. In particular, there is a generalized definition of the chromatic number, for which on the one hand the equivalent of Vizing's theorem was obtained by Mácajová et al. \cite{mavcajova2016chromatic}, but also it was proved that the Four Color Theorem for signed planar graphs does not hold \cite{kardovs20214}.
    In addition, there were also studied other problems on signed graphs such as circular coloring \cite{kang2018circular}, choosability \cite{jin2016choosability,schweser2017degree}, or chromatic spectrum \cite{kang2016chromatic,kang2018hajos}.
     For an overview of these research directions in the recent years see \cite{steffen2021concepts}.

    Recently, Behr \cite{behr} defined the problem of edge coloring of signed graphs in such a way that it generalizes the well-known edge coloring problem. Let $k$ be a positive integer and
    \begin{displaymath}
        M_k=
        \begin{cases}
            \{0, \pm 1, \ldots, \pm l\}, & \text{if $k=2l+1$,}\\
            \{\pm 1, \ldots, \pm l\}, & \text{if $k=2l$.}
        \end{cases}
    \end{displaymath}
    A \emph{$k$-edge-coloring} of a signed graph $S = (G, \sigma)$ is a function $f\colon I(G)\to M_k$ such that $f(u\text{:}uv)=-\sigma(uv)f(v\text{:}uv)$ for each edge $uv\in E(G)$ and $f(u\text{:}uv_1)\neq f(u\text{:}uv_2)$ for edges $uv_1\neq uv_2$. \emph{The chromatic index} of a signed graph $S$ is denoted by $\chi'(S)$ and defined as the smallest $k$ for which graph $S$ has a $k$-edge-coloring. It is a well-known fact that all the switching equivalent signed graphs have exactly the same chromatic index. 

    Behr \cite{behr} also proved the generalized Vizing's theorem, also called the Behr's theorem:
    \begin{theorem}[\cite{behr}]
        $\Delta(|S|)\leq\chi'(S)\leq\Delta(|S|)+1$ for every signed graphs $S$. \qed
    \end{theorem}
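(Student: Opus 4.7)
The plan is to verify the two inequalities separately.

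For the lower bound $\Delta(|S|) \leq \chi'(S)$, I would fix a vertex $v$ of maximum degree in $|S|$. In any valid $k$-edge-coloring $f\colon I(|S|)\to M_k$, the $\Delta(|S|)$ incidences $v\colon e$ with $e\ni v$ must receive pairwise distinct values from $M_k$; since $|M_k|=k$, this forces $k\geq\Delta(|S|)$.

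For the upper bound $\chi'(S)\leq\Delta(|S|)+1$, I would start by applying the classical Vizing theorem to the underlying graph $|S|$, obtaining a proper edge coloring by $\Delta(|S|)+1$ matchings $M_1,\ldots,M_{\Delta(|S|)+1}$. I then pair these matchings as $H_j = M_{2j-1}\cup M_{2j}$ for $j=1,\ldots,\lfloor(\Delta(|S|)+1)/2\rfloor$, leaving one unpaired matching when $\Delta(|S|)+1$ is odd. To the unpaired matching (if present) I would assign the value $0$ at both incidences of each of its edges, which trivially satisfies $0=-\sigma(uv)\cdot 0$. Each paired subgraph $H_j$ has maximum degree at most $2$ and therefore decomposes into paths and even-length alternating cycles; my aim is to color its incidences with values from $\{+j,-j\}$ so that the sign constraint on each edge holds and, at each vertex of degree $2$ in $H_j$, the two incidences receive opposite values.

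On path components this is immediate: fix a value at one endpoint and propagate along the path, each internal vertex forcing one sign and each edge forcing the next. The delicate part is the cycle components: a direct computation shows that an even cycle $C\subseteq H_j$ admits such a $\{+j,-j\}$-coloring if and only if $\prod_{e\in C}\sigma(e)=+1$, i.e.\ $C$ is positive as a signed cycle. Since the Vizing decomposition is oblivious to signs, some $H_j$ may contain negative alternating cycles, which is the central obstacle.

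My plan to handle this is via local Kempe-style exchanges on the Vizing coloring: whenever some $H_j$ contains a negative alternating cycle $C$, I would pick an edge $e\in C$ and reassign its color from $\{2j-1,2j\}$ to a different class $M_{j'}$, using an alternating $(2j-1,2j,j')$-chain argument analogous to Vizing's original fan recoloring, thereby breaking $C$ into a path. A monovariant such as the total number of negative alternating cycles across all pairings would ensure termination, producing a decomposition in which every cycle component is positive, and hence a valid $(\Delta(|S|)+1)$-edge-coloring of $S$ read off directly from the paired structure.
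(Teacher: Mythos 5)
The paper does not prove this statement; it is quoted from Behr's paper, so there is no internal proof to compare against. Judged on its own terms, your lower bound is correct and complete: the $\Delta(|S|)$ incidences at a maximum-degree vertex must receive pairwise distinct values in $M_k$ and $|M_k|=k$. Your analysis of the paired matchings is also correct: a degree-$2$ component colorable with $\{+j,-j\}$ is exactly a path or a positive cycle (this is the content of the paper's Theorem 3), so the whole difficulty of the upper bound is concentrated in eliminating negative cycles from the paired classes.

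That elimination step is where your argument has a genuine gap. Moving an edge $e$ of a negative cycle $C\subseteq H_j$ into another class $M_{j'}$ is not a local operation: in general neither endpoint of $e$ misses the color $j'$, so you must first perform a Kempe $(2j\text{ or }2j-1,\,j')$-swap along an alternating chain, and that swap modifies two color classes globally. It can merge or split components of the pairing containing $j'$ and of $H_j$ itself, and there is no reason the total number of negative alternating cycles cannot increase --- a single swap can turn a positive cycle plus a path into a negative cycle elsewhere. You assert that ``a monovariant such as the total number of negative alternating cycles would ensure termination,'' but you neither exhibit a quantity that provably decreases nor show that the recoloring is always available; this is precisely the hard core of the theorem, not a routine finishing move. (It is also not known to follow from an arbitrary Vizing coloring that some pairing of its classes avoids negative cycles.) Behr's actual proof does not post-process an unsigned Vizing coloring; it adapts the fan-recoloring induction directly to signed colorings, tracking the paired colors $\pm i$ and the special color $0$ throughout. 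To repair your argument you would need either to carry out that signed fan argument or to supply a genuine, verified potential function for your exchange process.
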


    Behr \cite{behr} introduced a \emph{class ratio $\mathcal{C}(G)$} of graph $G$ as the number of signatures $\sigma\colon E(G)\to\{\pm 1\}$ such that a signed graph $(G, \sigma)$ can be colored using $\Delta(G)$ colors, divided by the number of all possible signatures that can be defined on $E(G)$, that is, $2^{m(G)}$. The class ratio is a rational number satisfying $0\leq\mathcal{C}(G)\leq 1$.

    Additionally, Behr \cite{behr} proved some basic results concerning the problem of edge coloring of paths and cycles:
    \begin{theorem}[\cite{behr}]
        \label{thm:behr-path}
        Every signed path $S = (P_k, \sigma)$ can be colored using exactly $\Delta(|S|)$ colors. \qed
    \end{theorem}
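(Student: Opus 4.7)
The statement is easy modulo a careful case split on $k$. Write $P_k$ as the path $v_1 v_2 \cdots v_k$ with edges $e_i = v_i v_{i+1}$ for $1 \le i \le k-1$. I would handle the small cases $k \in \{1,2\}$ separately: for $k=1$ there are no edges to color, and for $k=2$ the only edge $e_1$ admits the coloring $f(v_1\text{:}e_1) = f(v_2\text{:}e_1) = 0$, which lies in $M_1 = \{0\}$ and trivially satisfies $f(v_1\text{:}e_1) = -\sigma(e_1) f(v_2\text{:}e_1)$.

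For $k \ge 3$ we have $\Delta(|S|) = 2$, so the colors must come from $M_2 = \{-1, +1\}$, a set of exactly two elements. The plan is to build the coloring greedily along the path from $v_1$ toward $v_k$. Start by setting $f(v_1\text{:}e_1) = 1$; the edge-sign constraint then forces $f(v_2\text{:}e_1) = -\sigma(e_1)$. Now proceed inductively: once the incidence $f(v_i\text{:}e_{i-1})$ is fixed, the proper-coloring condition at $v_i$ demands that $f(v_i\text{:}e_i) \neq f(v_i\text{:}e_{i-1})$, and since $|M_2| = 2$ this \emph{uniquely} determines $f(v_i\text{:}e_i) = -f(v_i\text{:}e_{i-1}) \in M_2$. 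The edge-sign constraint then determines $f(v_{i+1}\text{:}e_i) = -\sigma(e_i) f(v_i\text{:}e_i) \in M_2$, and the construction proceeds to the next edge.

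The resulting assignment trivially lies in $M_2$ at every incidence, satisfies the edge-sign equation by construction, and is proper at every internal vertex by the inductive choice; at the endpoints $v_1, v_k$ there is nothing to check beyond membership in $M_2$. Because a path has no cycles, no global consistency obstruction can arise, so no conflict can propagate back to an earlier choice.

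There is essentially no hard part here; the only delicate point is noticing that with $|M_2| = 2$ and internal degree exactly $2$, the color at each successive incidence is \emph{forced} rather than chosen, so the procedure is deterministic once $f(v_1\text{:}e_1)$ is picked, and it always succeeds. This shows $\chi'(S) \le \Delta(|S|)$, which combined with Behr's theorem yields equality.
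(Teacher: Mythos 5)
Your proof is correct. Note that the paper itself gives no proof of this statement: it is imported verbatim from Behr's paper and marked as such, so there is nothing internal to compare against. Your greedy argument is the natural one and it is sound: for $k\ge 3$ the two constraints (the edge relation $f(u\text{:}uv)=-\sigma(uv)f(v\text{:}uv)$ and properness at each internal vertex of degree $2$ with $|M_2|=2$) force every incidence color once $f(v_1\text{:}e_1)$ is fixed, all forced values stay in $\{\pm 1\}$, and acyclicity rules out any back-propagating conflict. The small cases $k\in\{1,2\}$ are handled correctly ($M_1=\{0\}$ and $0=-\sigma(e_1)\cdot 0$ holds for either sign). This is the standard proof of the result and nothing more is needed.
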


    It is easy to observe that a signed graph of a maximum degree $1$ can be colored using exactly one color---$0$. 

    \begin{theorem}[\cite{behr}]
        \label{thm:behr-cycle}
        A signed cycle $S = (C_k, \sigma)$ can be colored using $\Delta(|S|)$ colors if and only if $S$ is balanced. Otherwise, it requires $\Delta(|S|) + 1$ colors. \qed
    \end{theorem}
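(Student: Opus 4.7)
Since $\Delta(|S|) = 2$, the aim is to decide whether $\chi'(S)$ equals $2$ (using colors from $M_2 = \{\pm 1\}$) or $3$ (using $M_3 = \{0, \pm 1\}$); note that the upper bound $\chi'(S) \leq 3$ is already guaranteed by Behr's theorem, so the task reduces to characterising when a $2$-edge-coloring exists and to exhibiting a $3$-edge-coloring when it does not.

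For the forward implication, I would label the cycle $v_1 v_2 \ldots v_k v_1$, set $a_i := f(v_i\text{:}v_iv_{i+1})$ (with indices modulo $k$), and suppose a $2$-edge-coloring is given. Because the two incidences at $v_i$ must exhaust $\{+1, -1\}$, the companion incidence $f(v_i\text{:}v_{i-1}v_i)$ equals $-a_i$. Plugging this into the edge condition $f(v_i\text{:}v_iv_{i+1}) = -\sigma(v_iv_{i+1}) f(v_{i+1}\text{:}v_iv_{i+1})$ yields the one-step recurrence $a_i = \sigma(v_iv_{i+1}) a_{i+1}$. Telescoping once around the cycle gives $a_1 = \bigl(\prod_{e \in E(C_k)}\sigma(e)\bigr) a_1$, so the sign product must be $+1$, i.e.\ $S$ is balanced. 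The converse comes from reversing this construction: fix $a_1$ arbitrarily, propagate the remaining $a_i$ via the same recurrence, and observe that balance of $S$ is exactly the consistency condition required at the closing edge, so the resulting assignment is a valid $2$-edge-coloring.

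For the unbalanced case I would reduce to Theorem \ref{thm:behr-path}. Remove one edge $e_0$ of $C_k$ to leave a signed path $P$ on the same vertex set, colour $P$ with two colours from $\{+1, -1\}$ as guaranteed by Theorem \ref{thm:behr-path}, and then assign colour $0$ to both incidences of $e_0$. The edge condition for $e_0$ holds trivially because $-\sigma(e_0) \cdot 0 = 0$, and at each endpoint of $e_0$ the remaining incidence lies in $\{\pm 1\}$ and hence differs from $0$. This produces a valid $3$-edge-coloring; combined with the forward implication (which rules out a $2$-edge-coloring) this gives $\chi'(S) = 3$.

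The only real subtlety is carrying out the index arithmetic of the recurrence and telescoping cleanly; once that is in place, the unbalanced case is essentially free from Theorem \ref{thm:behr-path}.
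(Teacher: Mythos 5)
Your proof is correct. Note that the paper does not prove this statement at all---it is quoted from Behr \cite{behr} with a \qed---so there is no in-paper argument to compare against. Your telescoping of the recurrence $a_i = \sigma(v_iv_{i+1})a_{i+1}$ (which is forced because the two incidences at each vertex must exhaust $M_2 = \{\pm 1\}$) correctly characterises when a $2$-edge-coloring exists, and the reduction to \Cref{thm:behr-path} plus the color $0$ on the removed edge gives the matching upper bound in the unbalanced case; this is essentially the standard proof of Behr's result.
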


    It is also known that:
    \begin{theorem}[\cite{classes_one_two}]
        Every signed cactus $S$ that is not an unbalanced cycle can be colored using exactly $\Delta(|S|)$ colors.  \qed
    \end{theorem}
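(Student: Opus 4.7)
The plan is to split on $\Delta := \Delta(|S|)$. If $\Delta = 1$, $|S|$ is a matching and the single color $0 \in M_1$ trivially suffices. If $\Delta = 2$, any connected cactus is a path or a single cycle, and Theorems~\ref{thm:behr-path} and \ref{thm:behr-cycle} finish the job (the unbalanced cycle being the only excluded case). The substantive case is $\Delta \geq 3$.

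For $\Delta \geq 3$, I would exploit the block-cut tree decomposition: in a cactus every block is either a bridge or a cycle. Root the block-cut tree at an arbitrary block $B_0$ and color blocks in depth-first preorder, extending the partial coloring one block at a time. For a non-root block $B$ let $v$ be the unique cut vertex shared with the already-colored subgraph, and let $F_v \subseteq M_\Delta$ be the set of incidence colors already used at $v$. Because the blocks meeting at $v$ are edge-disjoint, one obtains $|F_v| + d_B(v) \leq \deg(v) \leq \Delta$.

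The core of the argument is the following lemma: for $\Delta \geq 3$, every signed block $B$ (bridge or cycle) with $\Delta(|B|) \leq \Delta$, every $v \in V(B)$, and every $F \subseteq M_\Delta$ with $|F| + d_B(v) \leq \Delta$, $B$ admits a $\Delta$-edge-coloring in which the incidence colors at $v$ on edges of $B$ all avoid $F$. A bridge is handled by picking any color in $M_\Delta \setminus F$ for the incidence at $v$ and propagating through the sign of the edge. For a cycle, I would switch to a canonical signature: all edges positive if $B$ is balanced, otherwise all positive except one, which a further switch can relocate to an edge not incident to $v$ (possible since $k \geq 3$). The coloring is then parameterised by edge-colors $c_1, \ldots, c_k \in M_\Delta$ subject to constraints of the form $c_{i+1} \neq -c_i$, with a single modified pair straddling the negative edge. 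First choose the two edge-colors incident to $v$ so that both resulting incidence colors at $v$ avoid $F$, which is feasible since $|F| \leq \Delta - 2$ leaves at least two candidate values. Then extend greedily around the cycle; each internal step forbids one color and has at least two valid choices, while the final closure forbids at most two colors and so has at least $\Delta - 2 \geq 1$ valid choice.

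The main obstacle I anticipate is verifying that the greedy closure never fails, especially in small cases such as the unbalanced triangle where the three constraints on the three edge-colors are tightly coupled and the negative edge forces an asymmetric adjacency relation. The way around this is to use both the switching freedom (pushing the negative edge away from $v$) and the slack $\Delta - 2 \geq 1$ from the hypothesis $\Delta \geq 3$, which together guarantee at least one valid choice at the most constrained step and hence close the induction.
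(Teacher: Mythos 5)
This statement is quoted from \cite{classes_one_two} and the present paper gives no proof of it (note the immediate \qed), so there is nothing in the paper to compare your argument against; I can only assess it on its own terms, and on those terms it is essentially sound. The block-cut-tree induction is the natural one for cacti: every block is a bridge or a cycle, a non-root block meets the already-colored part in exactly one cut vertex $v$, and the bound $|F_v|+d_B(v)\leq\deg(v)\leq\Delta$ is correct because the edges of $B$ at $v$ are among the not-yet-colored edges at $v$. Your extension lemma also holds: for a bridge one free color suffices, and for a cycle with $|F|\leq\Delta-2$ and $\Delta\geq 3$ the greedy closure always succeeds, since the last edge sees at most two forbidden values and $\Delta-2\geq 1$. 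Two small remarks. First, the canonical-signature step is both unnecessary and the one place where you would owe a detail: switching vertices of $B$ to relocate the negative edge may negate the incidence colors at $v$, and $F$ need not be symmetric, so you would have to arrange the switching set to avoid $v$ (always possible, since switching all of $V(B)$ is trivial). But your greedy count never uses balancedness --- the relation $f(u\text{:}e)=-\sigma(e)f(w\text{:}e)$ makes each adjacency constraint forbid exactly one value of the free parameter regardless of signs --- so you can simply drop the canonicalization. Second, the base cases are where the excluded hypothesis actually enters: the theorem implicitly takes cacti to be connected (otherwise an unbalanced-cycle component inside a larger $\Delta=2$ graph would be a counterexample), and if the root block is an unbalanced cycle then $S$ not being that cycle forces another block and hence $\Delta\geq 3$, which is exactly what lets you color it; it is worth saying this explicitly rather than leaving it inside ``the substantive case is $\Delta\geq 3$''.
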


    \begin{cor}
        Every signed tree $S$ can be colored using exactly $\Delta(|S|)$ colors. \qed
    \end{cor}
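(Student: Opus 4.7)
The plan is to observe that this corollary follows immediately from the preceding theorem on signed cacti, so the whole task reduces to verifying that a tree fits the hypothesis of that theorem. Recall that a \emph{cactus} is a graph in which every edge belongs to at most one cycle; since a tree $T$ contains no cycles at all, this condition holds vacuously, and so every tree is a cactus.

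Next, I would observe that a tree cannot be an unbalanced cycle for the trivial reason that it is not a cycle at all (it is acyclic, and in particular has $n-1$ edges rather than $n$). Thus any signed tree $S = (T, \sigma)$ satisfies the hypothesis ``signed cactus that is not an unbalanced cycle,'' so the theorem of \cite{classes_one_two} applies directly and yields $\chi'(S) = \Delta(|S|)$.

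There is essentially no obstacle here: the corollary is a one-line consequence of the cactus theorem. If for some reason a self-contained argument were preferred, one could instead proceed by induction on $n(T)$, picking a leaf $v$ with neighbor $u$, coloring $T - v$ by the inductive hypothesis with $\Delta(|T|)$ colors, and then extending: the incidence $u\text{:}uv$ has at most $\deg(u) - 1 \leq \Delta(|T|) - 1$ forbidden values from the already-colored edges at $u$, so some color $c \in M_{\Delta(|T|)}$ is available for $u\text{:}uv$, after which $v\text{:}uv$ is forced to be $-\sigma(uv)c$, which is automatically admissible since $v$ has no other incident edges. But citing the cactus theorem is the cleanest route, and that is the approach I would take.
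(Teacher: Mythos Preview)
Your proposal is correct and matches the paper's approach exactly: the corollary is stated with a \qed\ and no proof, since it follows immediately from the preceding cactus theorem once one notes that a tree is a cactus and is not an unbalanced cycle. Your optional inductive argument is also fine but, as you say, unnecessary.
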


    If $\chi'(S) = \Delta(|S|)$, we say $S$ is $\Delta$-edge-colorable.
    
    It follows from \Cref{thm:behr-path} and \Cref{thm:behr-cycle} that a signed graph of maximum degree $2$ can be colored using $2$ colors if each of its connected components is either a path or a balanced cycle. In most of the proofs, we show that a given signed graph has a decomposition into subgraphs of maximum degree $2$ or graphs of maximum degree $2$ and a graph of maximum degree $1$. Then it is clear that the subgraphs can be properly colored using either $2$ or $1$ colors and the colors used to color specific incidences do not matter for the proofs. We always guarantee that the sets of colors used to color different subgraphs are disjoint.

    In this paper, we focus on the products of signed graphs and determine the value of the chromatic index of products of some signed graphs. In \Cref{section_cartesian} we define what the Cartesian product of signed graphs is and prove that a Cartesian product of two $\Delta$-edge-colorable signed graphs is also $\Delta$-edge-colorable. We also find the chromatic index of products of a signed path and a signed cycle, and products of two signed cycles. In \Cref{section_tensor} we define a tensor product of signed graphs and prove that a tensor product of a $\Delta$-edge-colorable signed graph and a signed tree is $\Delta$-edge-colorable. In \Cref{section_strong} we define a strong product of signed graphs and prove that a strong product of two signed paths can be colored using $\Delta$ colors. In \Cref{section_corona} we deal with the concept of corona products of signed graphs by first giving its definition and then proving that the chromatic index of a corona product of any two non-trivial signed graphs is exactly equal to $\Delta$.

    Throughout the paper for simplicity we will often switch between signed graphs and their underlying graphs e.g. refer to $V(S)$, $E(S)$, $n(S)$, $m(S)$, $\Delta(S)$ instead of $V(|S|)$ etc.
    In the same manner, whenever we will refer to a sequence $H_1$, \ldots, $H_k$ as a decomposition of $S$, we will be denoting both signed graphs or their underlying graphs, depending on the particular context.

\section{Cartesian products}\label{section_cartesian}

    Let $S_1 = (G_1, \sigma_1)$ and $S_2 = (G_2, \sigma_2)$ be signed graphs. The Cartesian product of graphs $S_1$, $S_2$ is a signed graph $S = (G, \sigma)$, such that $V(S) = V(S_1) \times V(S_2)$ and there is an edge $e = (u, u') (v, v')$ in $S$, where $u, v \in V(S_1)$, $u', v' \in V(S_2)$, if and only if one of the following conditions holds:
    \begin{enumerate}
        \item $u = v$, $u' v' \in E(S_2)$. In such case $\sigma(e) = \sigma_2(u' v')$;
        \item $u v \in E(S_1)$, $u' = v'$. In such case $\sigma(e) = \sigma_1(u v)$.
    \end{enumerate}
    
    We denote $S$ by $S_1 \square S_2$. Clearly, $|S| = G_1 \square G_2$.

    It is easy to observe that $S$ consists of $n(S_1)$ disjoint copies of graph $S_2$ and $n(S_2)$ disjoint copies of graph $S_1$. It is also clear that $\deg_S((u, u')) = \deg_{S_1}(u) + \deg_{S_2}(u')$. It follows that $\Delta(S) = \Delta(S_1) + \Delta(S_2)$.
    
    \begin{theorem}\label{cartesian_delta_delta}
        Let $S_1 = (G_1, \sigma_1)$, $S_2 = (G_2, \sigma_2)$ be edge-disjoint signed graphs and $S = (G, \sigma) = S_1 \square S_2$. If $\chi'(S_1) = \Delta(S_1)$ and $\chi'(S_2) = \Delta(S_2)$, then $\chi'(S) = \Delta(S)$.
    \end{theorem}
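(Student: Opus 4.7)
The plan is to construct a proper $d$-edge-coloring of $S$ with $d = d_1 + d_2 = \Delta(S)$ by lifting color-class decompositions of the given optimal colorings of $S_1$ and $S_2$ into $S$ and assigning disjoint palettes from $M_d$ to each lifted piece. Let $f_j$ be a $d_j$-edge-coloring of $S_j$ for $j \in \{1,2\}$. For each $i \in \{1, \ldots, \lfloor d_j/2 \rfloor\}$, let $H_j^{(i)}$ denote the subgraph of $S_j$ spanned by edges with an incidence colored $\pm i$; by the definition of a signed edge coloring, $H_j^{(i)}$ has maximum degree at most $2$ and can be $2$-edge-colored using the pair $\{+i, -i\}$. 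If $d_j$ is odd, then the incidences colored $0$ cover a matching $M_j^{(0)} \subseteq E(S_j)$. Because $S$ is the edge-disjoint union of $n(S_2)$ vertex-disjoint copies of $S_1$ and $n(S_1)$ vertex-disjoint copies of $S_2$, each $H_j^{(i)}$ lifts to a subgraph $\hat{H}_j^{(i)} \subseteq S$ of max degree at most $2$ that remains $2$-edge-colorable (as a disjoint union of copies), and each $M_j^{(0)}$ lifts to a matching $\hat{M}_j^{(0)} \subseteq S$.

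When at least one of $d_1, d_2$ is even, a straightforward count shows that $M_d$ contains exactly the right number of color pairs (plus at most one occurrence of the color $0$) to assign each lifted piece a distinct palette. The subtle case is when both $d_1$ and $d_2$ are odd: then $d$ is even, $M_d = \{\pm 1, \ldots, \pm d/2\}$ contains no color $0$, and there are two leftover matchings $\hat{M}_1^{(0)}, \hat{M}_2^{(0)}$ to accommodate. The key step is to merge them into a single subgraph $U = \hat{M}_1^{(0)} \cup \hat{M}_2^{(0)}$ of max degree at most $2$ and show that $U$ is $2$-edge-colorable, so a single color pair from $M_d$ suffices for $U$. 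Every cycle in $U$ must alternate $S_1$-edges with $S_2$-edges, because the $\hat{M}_j^{(0)}$ are themselves matchings in $S$; two consecutive $S_j$-edges of such a cycle would project to two distinct edges of $M_j^{(0)}$ sharing a vertex, contradicting the fact that $M_j^{(0)}$ is a matching, and so they must coincide. Consequently, every cycle in $U$ has length exactly $4$, traverses a single $S_1$-edge $xy$ twice and a single $S_2$-edge $x'y'$ twice, and has sign product $\sigma_1(xy)^2 \sigma_2(x'y')^2 = 1$; hence every cycle in $U$ is balanced and, by \Cref{thm:behr-path} and \Cref{thm:behr-cycle}, $U$ is $2$-edge-colorable.

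In each of the three parity cases the lifted pieces form an edge-partition of $S$ whose parts can be $2$-edge-colored (or $1$-edge-colored using the color $0$, when applicable) with pairwise disjoint palettes inside $M_d$. Gluing the partial colorings produces a proper $d$-edge-coloring of $S$, so $\chi'(S) \le d$; combined with Behr's theorem this yields $\chi'(S) = d$. The main obstacle is the both-odd case, where $M_d$ does not contain the color $0$; its resolution is the merging trick above, whose correctness hinges on the non-obvious fact that the combined matching subgraph $U$ contains only balanced $4$-cycles.
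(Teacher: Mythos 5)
Your proposal is correct and follows essentially the same route as the paper's proof: lift the colour classes of the two optimal colourings into $S$ with disjoint palettes, and in the both-odd case observe that the union of the two lifted zero-colour matchings decomposes into matchings and balanced $4$-cycles (each such cycle using one $S_1$-edge and one $S_2$-edge twice, hence having positive sign product), so a single extra colour pair suffices. The paper phrases this as a colour shift followed by a repair of the conflicts at colour $0$, but the underlying decomposition and the key balanced-$C_4$ observation are identical to yours.
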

    
    \begin{proof}[Proof of theorem \ref{cartesian_delta_delta}]
        Let $V(S_i) = \{v_1^i, \ldots, v_{n_i}^i\}$, $E(S_i) = \{e_1^i, \ldots, e_{m_i}^i\}$ for $i \in \{1, 2\}$.
        
        We define functions $\eta_i^V \colon V(S) \to V(S_i)$ for $i \in \{1, 2\}$, such that $\eta_1^V(v_i^1, v_j^2) = v_i^1$ and $\eta_2^V(v_i^1, v_j^2) = v_j^2$. Analogously, we define functions $\eta_i^E \colon E(S) \to E(S_i)$ for $i \in \{1, 2\}$, such that $\eta_1^E((v_i^1, v_j^2),\allowbreak (v_k^1, v_j^2)) = v_i^1 v_k^1$, $\eta_2^E((v_i^1, v_j^2), (v_i^1, v_k^2)) = v_j^2 v_k^2$ and functions $\eta_i^I \colon I(S) \to I(S_i)$ for $i \in \{1, 2\}$, such that $\eta_i^I(v\text{:}e)) = \eta_i^V(v)\text{:}\eta_i^E(e)$.
        
        Let $c_i$ be an arbitrary optimal edge coloring of graph $S_i$, $i \in \{1, 2\}$.
        
        We consider two cases:
        \begin{enumerate}
            \item At least one of $\Delta(S_1)$, $\Delta(S_2)$ is even. Without loss of generality, we assume that $\Delta(S_2)$ is even. Let $c$ be a function such that:

            \begin{displaymath}
                c(v \text{:} e) =
                \begin{cases}
                    c_1(\eta_1^I(v \text{:} e)), & \text{if } e = (v_i^1, v_j^2), (v_k^1, v_j^2)\text{;}
                    \\
                    c_2(\eta_2^I(v \text{:} e)) + \lfloor \frac{\Delta(S_1)}{2} \rfloor, & \text{if } e = (v_i^1, v_j^2), (v_i^1, v_k^2) \text{ and } c_2(\eta_2^I(v \text{:} e)) > 0\text{;}
                    \\
                    c_2(\eta_2^I(v \text{:} e)) - \lfloor \frac{\Delta(S_1)}{2} \rfloor, & \text{if } e = (v_i^1, v_j^2), (v_i^1, v_k^2) \text{ and } c_2(\eta_2^I(v \text{:} e)) < 0\text{.}
                \end{cases}
            \end{displaymath}
        
            The equation covers all possible incidences because function $c_2$ is never equal to $0$ since $\Delta(S_2)$ is even and $c_2$ is a $\Delta(S_2)$-edge coloring of graph $S_2$.
            
            We observe that all the incidences from the copies of graph $S_1$ get the same colors in $c$ as the respective incidences in $c_1$. The copies are disjoint so the incidences are colored correctly. We denote those colors by $C_1$. $C_1 = \{\pm 1, \ldots, \pm\frac{\Delta(S_1)} {2}\}$ if $\Delta(S_1)$ is even or $C_1 = \{0, \pm1, \ldots, \pm\frac{\Delta(S_1)}{2}\}$ if $\Delta(S_1)$ is odd.
            
            We observe that incidences from the copies of graph $S_2$ get colors from the set $C_2 = \{1 + \lfloor \frac{\Delta(S_1)} {2}\rfloor, \ldots, \frac{\Delta(S_2)} {2} + \lfloor \frac{\Delta(S_1)} {2}\rfloor\} \cup \{-1 - \lfloor \frac{\Delta(S_1)} {2}\rfloor, \ldots, -\frac{\Delta(S_2)} {2} - \lfloor \frac{\Delta(S_1)} {2}\rfloor\}$ in $c$. The copies are disjoint and the colors used to color their incidences are equal to colors from $c_2$ shifted by either $\lfloor \frac{\Delta(S_1)}{2} \rfloor$ or $-\lfloor \frac{\Delta(S_1)}{2} \rfloor$, so the incidences are colored correctly.
            
            It is clear that $C_1 \cap C_2 = \emptyset$ since $\max\{|x|\colon x \in C_1\} < \min\{|x|\colon x \in C_2\}$. It follows that $c$ is an edge coloring of graph $S$. We observe that $c$ uses exactly $2 (\frac{\Delta(S_2)} {2} + \lfloor \frac{\Delta(S_1)} {2}\rfloor) = \Delta(S_1) + \Delta(S_2) = \Delta(S)$ colors when $\Delta(S_1)$ is even and exactly $1 + 2 (\frac{\Delta(S_2)} {2} + \lfloor \frac{\Delta(S_1)} {2}\rfloor) = 1 + 2 (\frac{\Delta(S_2)} {2} + \frac{\Delta(S_1) - 1} {2}) = \Delta(S_1) + \Delta(S_2) = \Delta(S)$ colors when $\Delta(S_1)$ is odd.
            
            \item Both $\Delta(S_1)$ and $\Delta(S_2)$ are odd. We observe that if we followed the previous case, both $c_1$ and $c_2$ would use color $0$ so the previous reasoning does not directly apply in this case since there would be adjacent incidences both colored with color $0$. We also observe that $\Delta(S)$ is even, so if there exists a $\Delta(S)$-coloring of graph $S$, it does not use color $0$. We show how to modify the reasoning so it also applies here.
            
            Let $c$ be a function analogous to the function $c$ from the previous case. We only extend it such that $c(v \text{:} e) = 0$ if $e = (v_i^1, v_j^2) (v_i^1, v_k^2)$ and $c_2(\eta_2^I(v\text{:}e)) = 0$. That way, it has a value for all the possible incidences of graph $S$. We observe that $c$ may not be an edge coloring of $S$, because there might be adjacent incidences sharing the same vertex with color $0$ assigned to both of them.
            
            Let $H$ be a subgraph of $S$ such that $uv \in E(H)$ if and only if $c(u \text{:} uv) = 0$ and $c(v \text{:} uv) = 0$, that is, it contains exactly the edges of $S$ such that their incidences get color $0$ in $c$. Note that from the definition of function $c$, it follows that if for some edge $uv$ it is true that $c(u \text{:} uv) = 0$, it is also true that $c(v\text{:}uv) = 0$. It is clear that $H$ is not an empty graph, because there exists at least one incidence in $S_i$ such that it gets color $0$ in coloring $c_i$, for $i \in \{1, 2\}$.
            
            Let us consider an arbitrary connected component $H'$ of graph $H$. Without loss of generality, we assume that $(v_i^1, v_j^2)$ $(v_k^1, v_j^2) \in E(H')$. We denote that edge by $e_1$. It is clear that $H'$ does not contain edge $(v_i^1, v_j^2)$ $(v_p^1, v_j^2)$ for $p \neq k$ and edge $(v_t^1, v_j^2)$ $(v_k^1, v_j^2)$ for $t \neq i$ for otherwise, there would be adjacent edges in $S_1$ both with color $0$ assigned in $c_1$. We define set $E'$ consisting of two edges: $e_2 = (v_i^1, v_j^2)$ $(v_i^1, v_p^2)$ and $e_3 = (v_k^1, v_p^2)$ $(v_k^1, v_j^2)$. There are two cases to consider:
            \begin{enumerate}
                \item $E' \cap E(H') = \emptyset$. Then $m(H') = 1$ so $H'$ is a matching.
                \item $E' \cap E(H') \neq \emptyset$. We observe that $E' \subset E(H')$ because $\eta_2^E(e_2) = \eta_2^E(e_3)$. Let $e_4 = (v_i^1, v_p^2), (v_k^1, v_p^2)$. Since $\eta_1^E(e_1) = \eta_1^E(e_4)$ and $e_4$ is adjacent to $e_2$ and $e_3, e_4 \in E(H')$.
                
                It is easy to observe that $H'$ contains only edges $e_1$, \ldots, $e_4$. Otherwise, edges colored by either $c_1$ or $c_2$ with color $0$ would not be matchings, so either $c_1$ or $c_2$ would not be a proper coloring. We observe that edges of graph $H'$ span a cycle $C_4$ and since $\sigma(e_1) = \sigma(e_4)$, $\sigma(e_2) = \sigma(e_3)$, the cycle is balanced.
            \end{enumerate}
        
            It follows that all the connected components of graph $H$ are either matchings or balanced cycles $C_4$, so $H$ can be colored using $2$ colors. We change the values of function $c$ such that incidences previously colored by color $0$ are now colored using colors $\pm(\lfloor \frac{\Delta(S_1)}{2} \rfloor + \lfloor \frac{\Delta(S_2)}{2} \rfloor + 1)$. After this change, $c$ is an edge coloring of graph $S$. Moreover, $c$ uses exactly $2 (\lfloor \frac{\Delta(S_1)}{2} \rfloor + \lfloor \frac{\Delta(S_2)}{2} \rfloor + 1) = 2 ( \frac{\Delta(S_1) - 1}{2} + \frac{\Delta(S_2) - 1}{2} + 1) = \Delta(S_1) + \Delta(S_2) = \Delta(S)$, so $c$ is a proper $\Delta(S)$-edge coloring of graph $S$.
        \end{enumerate}
    \end{proof}
    
    \noindent Since every signed path is $\Delta$-edge-colorable, it follows that:
    
    \begin{cor}\label{cartesian_path_path}
        Let $S_1 = (P_r, \sigma_1)$, $S_2 = (P_s, \sigma_2)$ and $S = S_1
        \square S_2$. Then it follows that $\chi'(S) = \Delta(S)$. \qed
    \end{cor}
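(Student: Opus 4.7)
The plan is to derive the corollary as an immediate consequence of the two previously established results: Theorem \ref{thm:behr-path} (every signed path is $\Delta$-edge-colorable) and Theorem \ref{cartesian_delta_delta} (the Cartesian product of two $\Delta$-edge-colorable, edge-disjoint signed graphs is $\Delta$-edge-colorable). In short, the approach is to verify that the two signed paths $S_1$ and $S_2$ meet the hypotheses of Theorem \ref{cartesian_delta_delta}, and then invoke that theorem.

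First, I would apply Theorem \ref{thm:behr-path} separately to each factor: it yields $\chi'(S_1) = \Delta(S_1)$ and $\chi'(S_2) = \Delta(S_2)$, so both factors are $\Delta$-edge-colorable. Second, I would note that the edge-disjointness required by Theorem \ref{cartesian_delta_delta} is automatic in the setting of the Cartesian product, because the definition of $S_1 \square S_2$ formally treats $S_1$ and $S_2$ as objects living on disjoint vertex (and therefore disjoint edge) label spaces; alternatively, one may simply rename vertices so that $V(S_1) \cap V(S_2) = \emptyset$, which does not affect the chromatic index of either factor. Third, I would plug these facts into Theorem \ref{cartesian_delta_delta} and conclude $\chi'(S) = \Delta(S)$.

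The hard part, essentially, does not exist: the corollary is a one-line consequence of stitching together two results already in hand, which is exactly why the excerpt prefaces it with the remark that every signed path is $\Delta$-edge-colorable. The only point requiring any care is confirming the edge-disjointness hypothesis, but this is a triviality of the product construction rather than a genuine step in the argument.
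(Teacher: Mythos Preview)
Your proposal is correct and matches the paper's approach exactly: the paper prefaces the corollary with the sentence ``Since every signed path is $\Delta$-edge-colorable, it follows that:'' and marks it with \qed, treating it as an immediate consequence of Theorem~\ref{thm:behr-path} and Theorem~\ref{cartesian_delta_delta}. Your added remark about the edge-disjointness hypothesis is a reasonable clarification but not something the paper spells out.
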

    
    \begin{figure}
        \centering
        \begin{tikzpicture}[thick, scale=0.7]
            \node[graph-label] (label) at (2,-0.75) {a)};
            \node[vertex] (1) at (0, 0) {};
            \node[vertex] (2) at (2, 0) {};
            \node[vertex] (3) at (4, 0) {};
            
            \node[vertex] (5) at (0, 2) {};
            \node[vertex] (6) at (2, 2) {};
            \node[vertex] (7) at (4, 2) {};
            
            \node[vertex] (9) at (0, 4) {};
            \node[vertex] (10) at (2, 4) {};
            \node[vertex] (11) at (4, 4) {};
            
            \node[vertex] (13) at (0, 6) {};
            \node[vertex] (14) at (2, 6) {};
            \node[vertex] (15) at (4, 6) {};

            \foreach \source / \dest in {1/2,2/3,5/6,6/7,9/10,10/11,13/14,14/15}
                    \path [dotted-edge] (\source) -- (\dest);
                    
            \foreach \source / \dest in {1/5,9/13,2/6,10/14,3/7,11/15}
                    \path [bold-edge] (\source) -- (\dest);

            \foreach \source / \dest in {1/2,13/14}
                    \path [bold-edge] (\source) -- (\dest);
                    
            \foreach \source / \dest in {5/9,6/10,7/11}
                    \path [bold-edge] (\source) -- (\dest);

            \foreach \source / \dest in {1/13,2/14,3/15}
                    \path [dotted-edge] (\source) edge [bend left=30] (\dest);

        \end{tikzpicture}
        \hspace{1cm}%
        \begin{tikzpicture}[thick, scale=0.7]
            \node[graph-label] (label) at (3,-0.75) {b)};
            \node[vertex] (1) at (0, 0) {};
            \node[vertex] (2) at (2, 0) {};
            \node[vertex] (3) at (4, 0) {};
            \node[vertex] (4) at (6, 0) {};
            
            \node[vertex] (5) at (0, 2) {};
            \node[vertex] (6) at (2, 2) {};
            \node[vertex] (7) at (4, 2) {};
            \node[vertex] (8) at (6, 2) {};
            
            \node[vertex] (9) at (0, 4) {};
            \node[vertex] (10) at (2, 4) {};
            \node[vertex] (11) at (4, 4) {};
            \node[vertex] (12) at (6, 4) {};
            
            \node[vertex] (13) at (0, 6) {};
            \node[vertex] (14) at (2, 6) {};
            \node[vertex] (15) at (4, 6) {};
            \node[vertex] (16) at (6, 6) {};

            \foreach \source / \dest in {1/2,2/3,3/4,5/6,6/7,7/8,9/10,10/11,11/12,13/14,14/15,15/16}
                    \path [dotted-edge] (\source) -- (\dest);
                    
            \foreach \source / \dest in {1/5,9/13,2/6,10/14,3/7,11/15,4/8,12/16}
                    \path [bold-edge] (\source) -- (\dest);

            \foreach \source / \dest in {1/2,3/4,13/14,15/16}
                    \path [bold-edge] (\source) -- (\dest);
                    
            \foreach \source / \dest in {5/9,6/10,7/11,8/12}
                    \path [bold-edge] (\source) -- (\dest);

            \foreach \source / \dest in {1/13,2/14,3/15,4/16}
                    \path [dotted-edge] (\source) edge [bend left=30] (\dest);

        \end{tikzpicture}
        \caption{Example signed graphs considered in \Cref{cartesian_path_cycle} -- $(a)$ $(P_3 \square C_4, \sigma)$, $(b)$ $(P_4 \square C_4, \sigma)$. Edges belonging to graph $H_1$ are marked with bold lines in both cases.}
        \label{figure_cartesian_path_cycle}
    \end{figure}
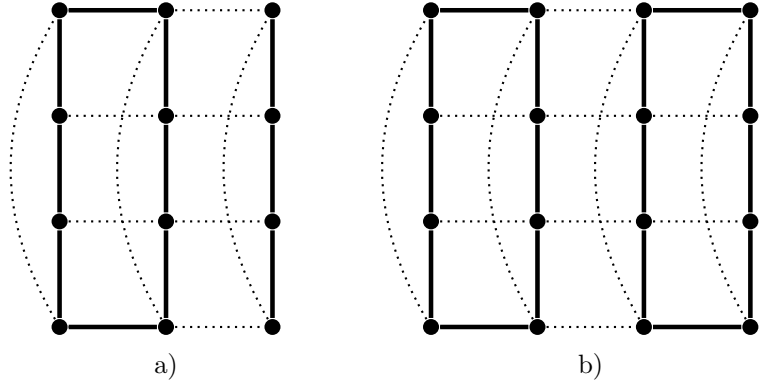

    Next, we deal with Cartesian products of paths and cycles. Obviously, since $P_1 \square C_s = C_s$ and the chromatic index of signed cycles is already known (see \Cref{thm:behr-cycle}), we can restrict our attention to longer paths:
    \begin{theorem}\label{cartesian_path_cycle}
        Let $S_1 = (P_r, \sigma_1)$, $S_2 = (C_s, \sigma_2)$ and $S = (P_r \square C_s, \sigma) = S_1 \square S_2$. If $r > 1$, then $\chi'(S) = \Delta(S)$.
    \end{theorem}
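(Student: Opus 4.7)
The plan is to decompose $S$ into two edge-disjoint spanning subgraphs $H_1$ and $H_2$, each of maximum degree~$2$ and each a disjoint union of paths and balanced cycles. Every such $H_i$ is then $2$-edge-colourable by Theorems~\ref{thm:behr-path} and \ref{thm:behr-cycle}, and using disjoint colour palettes on $H_1$ and $H_2$ combines them into a $\Delta(S)$-edge-colouring of $S$. A single underlying observation does most of the work: in any Cartesian product, a cycle in which each underlying edge of $|S_1|$ or $|S_2|$ is traversed an even number of times has signature product~$1$, hence is balanced. All cycles produced in my decompositions below will have this form, so balance is automatic even when $C_s$ itself is unbalanced.

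I would split on $r$. For $r=2$, where $\Delta(S)=3$, let $H_1$ be the Hamilton cycle of $P_2\square C_s$ that traverses one $C_s$-copy from row~$1$ to row~$s$, crosses via the rung at row~$s$, returns along the other $C_s$-copy back to row~$1$, and closes with the rung at row~$1$. The complement $H_2$ consists of the two closing edges of the two $C_s$-copies together with the $s-2$ unused rungs, and is readily seen to be a perfect matching. Since $H_1$ uses the unique edge of $|P_2|$ twice and each non-closing edge of $|C_s|$ twice, it is balanced. Colouring $H_1$ with palette $\{-1,+1\}$ and $H_2$ with palette $\{0\}$ gives a valid $3$-edge-colouring.

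For $r\ge 3$, where $\Delta(S)=4$, I would pair the columns (the $C_s$-copies) as $(1,2),(3,4),\ldots$, leaving column $r$ unpaired when $r$ is odd. Inside each pair $(2i-1,2i)$, delete the closing edge of both columns to obtain two copies of $P_s$, and reconnect them via the horizontal edges at rows~$1$ and~$s$ to produce a $C_{2s}$; take $H_1$ to be the union of all such cycles together with the leftover column (as a $P_s$) when $r$ is odd. The complement $H_2$ contains the $r$ deleted closing edges, the horizontal edges of rows~$1$ and~$s$ not already used in $H_1$, and all horizontal edges in rows~$2,\ldots,s-1$; one checks directly that $H_2$ splits into middle-row paths $P_r$, at most two pendant $P_2$'s arising from the lone closing edges of the outermost columns when $r$ is even, and one balanced $C_4$ per ``complementary'' pair $(2i,2i+1)$ assembled from two closing edges and two horizontal edges at rows~$1$ and~$s$. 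Disjoint palettes $\{-1,+1\}$ and $\{-2,+2\}$ then combine into a $4$-edge-colouring.

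The principal obstacle is the combinatorial bookkeeping: verifying that the two subgraphs partition $E(S)$ with every vertex of degree at most $2$ in each, and that the enumerated components exhaust $H_1$ and $H_2$ for every parity of $r$ and every $s\ge 3$. The balance verifications themselves reduce in every case to the identity $\sigma^2=1$ and are short.
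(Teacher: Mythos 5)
Your proposal is correct and follows essentially the same route as the paper: the same decomposition into $H_1$ (the $C_s$-copies opened at their closing edges and joined in consecutive pairs into balanced $C_{2s}$'s, plus a leftover $P_s$ when $r$ is odd) and $H_2$ (the closing edges and remaining rungs, splitting into paths, at most two lone edges, and balanced $C_4$'s), with the same palette-splitting conclusion for $r=2$ versus $r\ge 3$. The only cosmetic difference is that you justify balance via the ``each underlying edge traversed an even number of times'' observation, which is just a cleaner phrasing of the paper's sign-pairing argument.
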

    
    \begin{proof}[Proof of theorem \ref{cartesian_path_cycle}]
        We first observe that $\Delta(S) = 3$ if $r = 2$, otherwise $\Delta(S) = 4$. 
        
        Let $V(S_1) = \{v_1^1, \ldots, v_{r}^1\}$ and $V(S_2) = \{v_1^2, \ldots, v_{s}^2\}$.
        
        We first observe that $\sigma((v_i^1, v_j^2) (v_{i+1}^1, v_j^2)) = \sigma((v_i^1, v_k^2)(v_{i+1}^1, v_k^2))$ for $1 \leq i < r$, $1 \leq j, k \leq s$. Moreover, $\sigma((v_i^1, v_k^2)(v_i^1, v_{k + 1}^2)) = \sigma((v_j^1, v_k^2)(v_j^1, v_{k + 1}^2))$ for $1 \leq i, j \leq r$, $1 \leq k < s$ and $\sigma((v_i^1, v_1^2)(v_i^1, v_{s}^2)) = \sigma((v_j^1, v_1^2)(v_j^1, v_{s}^2))$ for $1 \leq i, j \leq r$.
        
        We will show the decomposition of $S$ into graphs $H_1$, $H_2$ such that both of them can be colored using $2$ colors. Let us first describe the structure of $H_1$: $V(H_1) = V(S)$, $E(H_1) = \{(v_i^1, v_j^2)(v_i^1, v_{j+1}^2) \colon 1 \leq i \leq r$, $1 \leq j < s\} \cup \{(v_{2i - 1}^1, v_j^2)(v_{2i}^1, v_j^2) \colon 1 \leq i \leq \lfloor \frac{r}{2} \rfloor$, $j \in
        \{1$, $s\} \}$ (see also examples of graph $H_1$ in \Cref{figure_cartesian_path_cycle}). It follows that $H_1$ contains $\lfloor \frac{r}{2}\rfloor$ cycles and possibly a single path when $r$ is odd. It is clear that all the cycles are balanced, as they contain an even number of pairs of edges with the same signs (it follows from the previous observation). Then it becomes clear that $H_1$ contains balanced cycles and possibly a path, so $H_1$ can be colored using $2$ colors.
        
        It is easy to verify that $H_2$ is then defined as $V(H_2) = V(S)$ with $E(H_2) = E_1 \cup E_2 \cup E_3$, where $E_1 = \{(v_i^1, v_1^2)(v_i^1, v_{s}^2) \colon 1 \leq j \leq r\}$, $E_2 = \{(v_i^1, v_j^2)(v_{i+1}^1, v_j^2) \colon \allowbreak 1 \leq i < r,\allowbreak 1 < j < s\}$ and $E_3 = \{(v_{2i}^1, v_j^2)(v_{2i + 1}^1, v_j^2) \colon 1 \leq i < \lfloor \frac{r}{2} \rfloor$, $j \in \{1, s\}\}$. It is clear that sets $E_1$, $E_2$, $E_3$ are disjoint. We observe that $E_2$ can be partitioned into $s - 2$ paths of length $r - 1$. $E_1 \cup E_3$ can be partitioned into 1 or 2 edges (when $r$ is odd or even, respectively) and $\lfloor \frac{r - 1}{2} \rfloor$ cycles. The cycles are balanced since each of them consists of two pairs of edges with the same signs. We consider two cases:
        \begin{enumerate}
            \item $r = 2$. $H_2$ has a decomposition into the following components: two disjoint edges and $s - 2$ disjoint paths of length $1$, which are just disjoint edges. $H_2$ has a decomposition into disjoint edges, so it is a matching and can be colored using color $0$. Since $H_1$ is colored using $2$ colors, $0$ is not used for $H_1$ and can be used to color $H_2$. It follows that $S$ can be colored using $3$ colors, and $\Delta(S) = 3$.
            \item $r > 2$. $H_2$ has a decomposition into the following components: one or two disjoint edges (when $r$ is odd or even, respectively), and $s - 2$ disjoint paths and balanced cycles. It follows that $H_2$ can be colored using $2$ colors, so $S$ can be colored using $4$ colors in total and $\Delta(S) = 4$.
        \end{enumerate}

        It follows that in all cases $S$ can be colored using $\Delta(S)$ colors in total.
    \end{proof}
    
    \begin{lemma}\label{cartesian_c2r_c2s}
        Let $S_1 = (C_{2r}, \sigma_1)$, $S_2 = (C_{2s}, \sigma_2)$ and $S = (G, \sigma) = S_1 \square S_2$. Then it holds that $\chi'(S) = \Delta(S)$.
    \end{lemma}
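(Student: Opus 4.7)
My plan is to produce a decomposition $E(S)=E(H_1)\sqcup E(H_2)$ in which each $H_i$ is a disjoint union of balanced $4$-cycles; granted this, \Cref{thm:behr-cycle} gives each $H_i$ a $2$-edge-coloring, and using disjoint palettes $\{-1,+1\}$ for $H_1$ and $\{-2,+2\}$ for $H_2$ assembles these into a proper $4$-edge-coloring of $S$. Since $\Delta(S)=4$, this matches the lower bound of Behr's theorem and gives $\chi'(S)=\Delta(S)$.

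To build the decomposition I will use a ``brick-wall'' tiling of the torus. Writing $V(S_1)=\{u_1,\dots,u_{2r}\}$ and $V(S_2)=\{w_1,\dots,w_{2s}\}$ with indices taken modulo $2r$ and $2s$, I let $H_1$ be the union, over $k\in\{1,\dots,r\}$ and $l\in\{1,\dots,s\}$, of the $4$-cycle on $(u_{2k-1},w_{2l-1})$, $(u_{2k},w_{2l-1})$, $(u_{2k},w_{2l})$, $(u_{2k-1},w_{2l})$, and I let $H_2$ be the analogous union of the shifted $4$-cycles on $(u_{2k},w_{2l})$, $(u_{2k+1},w_{2l})$, $(u_{2k+1},w_{2l+1})$, $(u_{2k},w_{2l+1})$. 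Equivalently, an $S_1$-edge of $S$ lies in $H_1$ iff its $S_1$-projection is $u_{2k-1}u_{2k}$ for some $k$, and an $S_2$-edge lies in $H_1$ iff its $S_2$-projection is $w_{2l-1}w_{2l}$ for some $l$. Because $2r$ and $2s$ are both even, the pairings $(u_{2k-1},u_{2k})$ and $(u_{2k},u_{2k+1})$ form two complementary perfect matchings of $V(S_1)$, and similarly for the $w$-indices; a direct index check then shows that every vertex of $S$ lies in exactly one $4$-cycle of each $H_i$ and that the two families together cover each edge of $S$ exactly once.

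The key point is that every such $4$-cycle is automatically balanced: by the definition of the Cartesian product of signed graphs, its two $S_1$-edges are copies of a single edge of $S_1$ (so share the same sign), and likewise for its two $S_2$-edges, whence the product of signs around the $4$-cycle equals $\sigma_1(u_{2k-1}u_{2k})^{2}\,\sigma_2(w_{2l-1}w_{2l})^{2}=1$. The same calculation handles the ``wrap-around'' $4$-cycles of $H_2$ crossing $u_{2r}u_1$ or $w_{2s}w_1$, and no case distinction on the balances of $S_1$ or $S_2$ is ever required---which is precisely what lets this argument succeed where \Cref{cartesian_delta_delta} does not (that theorem needs both factors to be $\Delta$-edge-colorable, i.e.\ balanced when they are cycles). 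The only step demanding any care is the index bookkeeping that partitions $V(S_1)$ and $V(S_2)$ into the two complementary pair-matchings, and this depends essentially on both $2r$ and $2s$ being even.
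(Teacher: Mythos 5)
Your proof is correct and takes essentially the same approach as the paper: both decompose $C_{2r}\square C_{2s}$ into two spanning families of vertex-disjoint $4$-cycles (your two ``brick tilings,'' with $H_1$ and $H_2$ swapped relative to the paper's labeling), observe that each $4$-cycle consists of two copies of one $S_1$-edge and two copies of one $S_2$-edge and is therefore balanced, and color the two families with disjoint pairs of colors. The only difference is presentational---you handle the wrap-around cycles via modular indices where the paper enumerates them as separate cases.
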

    
    \begin{proof}
        First, let us note that $\Delta(S) = 4$. Let $V(S_1) = \{v_1^1, \ldots, v_{2r}^1\}$ and $V(S_2) = \{v_1^2, \ldots, v_{2s}^2\}$.
        
        We will show the decomposition of $S$ into graphs $H_1$, $H_2$ such that both of them can be colored using $2$ colors regardless of the edge signs. We first describe the structure of $H_1$. Obviously, $V(H_1) = V(S)$. Graph $H_1$ contains:
        \begin{enumerate}
            \item\label{case_1_cartesian_c2r_c2s} Edges $e_1 = (v_1^1, v_1^2) (v_1^1, v_{2s}^2)$, $e_2 = (v_1^1, v_{2s}^2) (v_{2r}^1, v_{2s}^2)$, $e_3 = (v_{2r}^1, v_{2s}^2) (v_{2r}^1,\allowbreak v_1^2)$, $e_4 = (v_{2r}^1, v_1^2) (v_1^1, v_1^2)$. It is easy to observe that these edges span a cycle (see \Cref{fig:thm9-1}) and it is balanced regardless of the signs in $S_1$ and $S_2$ since $\sigma(e_1) = \sigma(e_3)$ and $\sigma(e_2) = \sigma(e_4)$.

            \item\label{case_2_cartesian_c2r_c2s} Edges $e_1^i = (v_{2i}^1, v_1^2) (v_{2i}^1, v_{2s}^2)$, $e_2^i = (v_{2i}^1, v_{2s}^2) (v_{2i + 1}^1, v_{2s}^2)$, $e_3^i = (v_{2i + 1}^1, \allowbreak v_{2s}^2) (v_{2i + 1}^1, v_1^2)$, $e_4^i = (v_{2i + 1}^1, v_1^2) (v_{2i}^1, v_1^2)$ for $1 \leq i < r$. Again, it is easy to observe that $e_1^i$, $e_2^i$, $e_3^i$, $e_4^i$ span a cycle (see \Cref{fig:thm9-2}) and it is balanced since $\sigma(e_1^i) = \sigma(e_3^i)$ and $\sigma(e_2^i) = \sigma(e_4^i)$ for $1 \leq i < r$. All the cycles are vertex-disjoint.

            \item\label{case_3_cartesian_c2r_c2s} Edges $e_1^i = (v_{1}^1, v_{2i}^2) (v_{1}^1, v_{2i + 1}^2)$, $e_2^i = (v_{1}^1, v_{2i + 1}^2) (v_{2r}^1, v_{2i + 1}^2)$, $e_3^i = (v_{2r}^1,$ $v_{2i + 1}^2) (v_{2r}^1, v_{2i}^2)$, $e_4^i = (v_{2r}^1, v_{2i}^2) (v_{1}^1, v_{2i}^2)$ for $1 \leq i < s$. We note that $e_1^i$, $e_2^i$, $e_3^i$, $e_4^i$ span a cycle (see \Cref{fig:thm9-3}) and it is balanced since $\sigma(e_1^i) = \sigma(e_3^i)$ and $\sigma(e_2^i) = \sigma(e_4^i)$ for $1 \leq i < s$. All the cycles are vertex-disjoint.
            
            \item\label{case_4_cartesian_c2r_c2s} Edges $e_1^{i, j} = (v_{2i}^1, v_{2j}^2) (v_{2i}^1, v_{2j + 1}^2)$, $e_2^{i, j} = (v_{2i}^1, v_{2j + 1}^2) (v_{2i + 1}^1, v_{2j + 1}^2)$, $e_3^{i, j} = (v_{2i + 1}^1, v_{2j + 1}^2) (v_{2i + 1}^1, v_{2j}^2)$, $e_4^{i, j} = (v_{2i + 1}^1, v_{2j}^2) (v_{2i}^1, v_{2j}^2)$ for $1 \leq i < r$, $1 \leq j < s$. Again we observe that $e_1^{i, j}$, $e_2^{i, j}$, $e_3^{i, j}$, $e_4^{i, j}$ span a cycle (see \Cref{fig:thm9-4}) and it is balanced since $\sigma(e_1^{i, j}) = \sigma(e_3^{i, j})$ and $\sigma(e_2^{i, j}) = \sigma(e_4^{i, j})$ for $1 \leq i < r$, $1 \leq j < s$. All the cycles are vertex-disjoint.
        \end{enumerate}

        It is easy to observe that all the cycles listed above are vertex-disjoint, so graph $H_1$ has a decomposition into balanced cycles and can be colored using colors $\pm1$. We note that the decomposition consists of $1 + (r - 1) + (s - 1) + (r - 1)(s - 1) = rs$ cycles, so $m(H_1) = 4rs = \frac{m(S)}{2}$ since $m(S) = 4(2r)(2s) / 2 = 8rs$. Clearly, $H_1$ is $2$-regular.

        $V(H_2) = V(S)$, $E(H_2) = E(S) \setminus E(H_1)$. It follows that graph $H_2$ contains edges $e_1^{i, j} = (v_{2i - 1}^1, v_{2j - 1}^2) (v_{2i - 1}^1, v_{2j}^2)$, $e_2^{i, j} = (v_{2i - 1}^1, v_{2j}^2) (v_{2i}^1, v_{2j}^2)$, $e_3^{i, j} = (v_{2i}^1, v_{2j}^2) (v_{2i}^1, v_{2j - 1}^2)$, $e_4^{i, j} = (v_{2i}^1, v_{2j - 1}^2) (v_{2i - 1}^1, v_{2j - 1}^2)$ for $1 \leq i \leq r$, $1 \leq j \leq s$. It is easy to observe that $e_1^{i, j}$, $e_2^{i, j}$, $e_3^{i, j}$, $e_4^{i, j}$ span a cycle and it is balanced since $\sigma(e_1^{i, j}) = \sigma(e_3^{i, j})$ and $\sigma(e_2^{i, j}) = \sigma(e_4^{i, j})$ for $1 \leq i \leq r$, $1 \leq j \leq s$. We also observe that $H_1$ and $H_2$ are edge-disjoint. Graph $H_2$ has a decomposition into vertex-disjoint balanced cycles, so it can be colored using colors $\pm2$. $H_2$ contains $rs$ cycles, so $m(H_2) = 4rs = \frac{m(S)}{2}$.

        Clearly, $S = H_1 \cup H_2$ so $S$ can be colored using $4$ colors. It follows that $\chi'(S) = 4 = \Delta(S)$.
    \end{proof}

    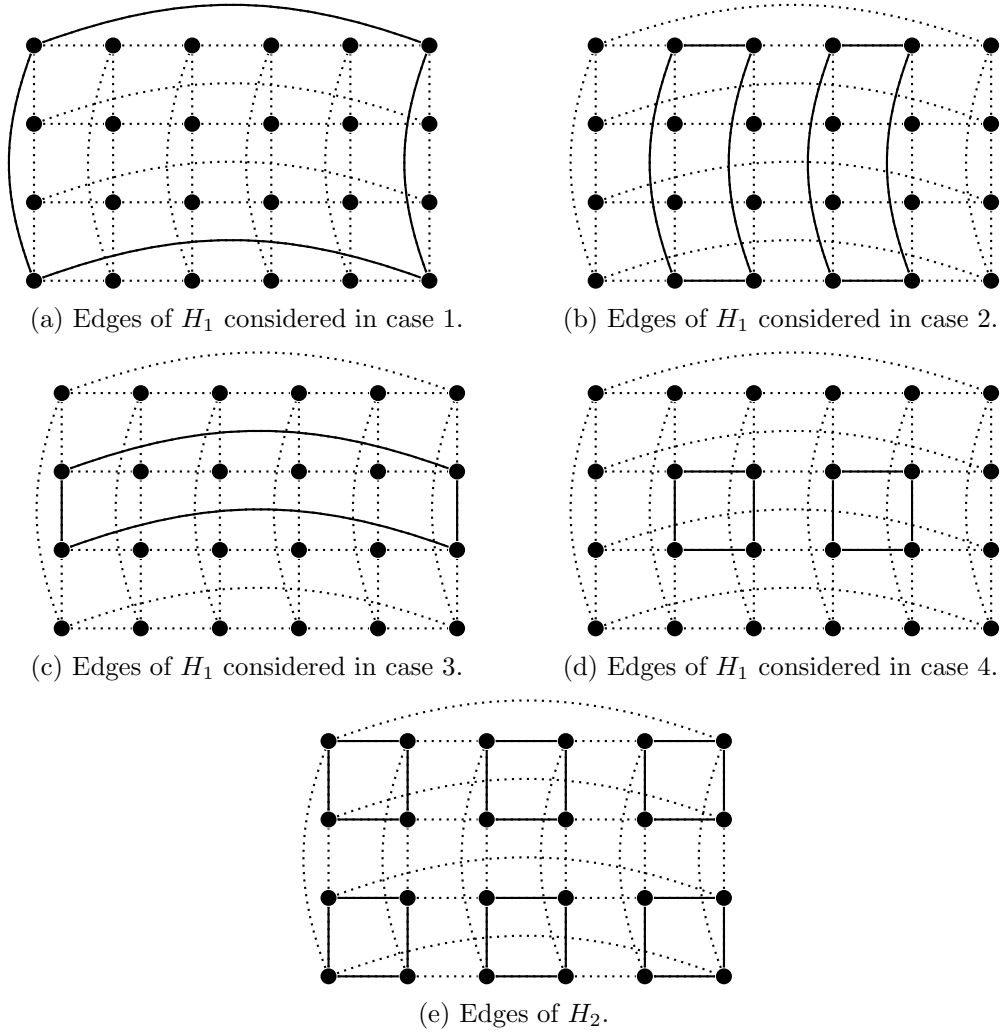
\begin{figure}[htpb]
        \centering
        \begin{subfigure}{0.48\textwidth}
        \begin{tikzpicture}[thick, scale=0.52]
            \node[vertex] (1) at (0, 0) {};
            \node[vertex] (2) at (2, 0) {};
            \node[vertex] (3) at (4, 0) {};
            \node[vertex] (4) at (6, 0) {};
            \node[vertex] (5) at (8, 0) {};
            \node[vertex] (6) at (10, 0) {};
            
            \node[vertex] (7) at (0, 2) {};
            \node[vertex] (8) at (2, 2) {};
            \node[vertex] (9) at (4, 2) {};
            \node[vertex] (10) at (6, 2) {};
            \node[vertex] (11) at (8, 2) {};
            \node[vertex] (12) at (10, 2) {};
            
            \node[vertex] (13) at (0, 4) {};
            \node[vertex] (14) at (2, 4) {};
            \node[vertex] (15) at (4, 4) {};
            \node[vertex] (16) at (6, 4) {};
            \node[vertex] (17) at (8, 4) {};
            \node[vertex] (18) at (10, 4) {};
            
            \node[vertex] (19) at (0, 6) {};
            \node[vertex] (20) at (2, 6) {};
            \node[vertex] (21) at (4, 6) {};
            \node[vertex] (22) at (6, 6) {};
            \node[vertex] (23) at (8, 6) {};
            \node[vertex] (24) at (10, 6) {};

            \foreach \source / \dest in {1/2,2/3,3/4,4/5,5/6,
                                         7/8,8/9,9/10,10/11,11/12,
                                         13/14,14/15,15/16,16/17,17/18,
                                         19/20,20/21,21/22,22/23,23/24}
                    \path [dotted-edge] (\source) -- (\dest);
                    
            \foreach \source / \dest in {1/6,7/12,13/18,19/24}
                    \path [dotted-edge] (\source) edge [bend left=20] (\dest);

            \foreach \source / \dest in {1/7,7/13,13/19,
                                         2/8,8/14,14/20,
                                         3/9,9/15,15/21,
                                         4/10,10/16,16/22,
                                         5/11,11/17,17/23,
                                         6/12,12/18,18/24}
                    \path [dotted-edge] (\source) -- (\dest);
                    
            \foreach \source / \dest in {1/19,2/20,3/21,4/22,5/23,6/24}
                    \path [dotted-edge] (\source) edge [bend left=20] (\dest);

            \foreach \source / \dest in {1/19,19/24,6/24,1/6}
                    \path [edge] (\source) edge [bend left=20] (\dest);

        \end{tikzpicture}
        \caption{Edges of $H_1$ considered in case \ref{case_1_cartesian_c2r_c2s}.}
        \label{fig:thm9-1}
        \end{subfigure}
        \hfill
        \begin{subfigure}{0.48\textwidth}
        \centering
        \begin{tikzpicture}[thick, scale=0.52]
            \node[vertex] (1) at (0, 0) {};
            \node[vertex] (2) at (2, 0) {};
            \node[vertex] (3) at (4, 0) {};
            \node[vertex] (4) at (6, 0) {};
            \node[vertex] (5) at (8, 0) {};
            \node[vertex] (6) at (10, 0) {};
            
            \node[vertex] (7) at (0, 2) {};
            \node[vertex] (8) at (2, 2) {};
            \node[vertex] (9) at (4, 2) {};
            \node[vertex] (10) at (6, 2) {};
            \node[vertex] (11) at (8, 2) {};
            \node[vertex] (12) at (10, 2) {};
            
            \node[vertex] (13) at (0, 4) {};
            \node[vertex] (14) at (2, 4) {};
            \node[vertex] (15) at (4, 4) {};
            \node[vertex] (16) at (6, 4) {};
            \node[vertex] (17) at (8, 4) {};
            \node[vertex] (18) at (10, 4) {};
            
            \node[vertex] (19) at (0, 6) {};
            \node[vertex] (20) at (2, 6) {};
            \node[vertex] (21) at (4, 6) {};
            \node[vertex] (22) at (6, 6) {};
            \node[vertex] (23) at (8, 6) {};
            \node[vertex] (24) at (10, 6) {};

            \foreach \source / \dest in {1/2,2/3,3/4,4/5,5/6,
                                         7/8,8/9,9/10,10/11,11/12,
                                         13/14,14/15,15/16,16/17,17/18,
                                         19/20,20/21,21/22,22/23,23/24}
                    \path [dotted-edge] (\source) -- (\dest);
                    
            \foreach \source / \dest in {1/6,7/12,13/18,19/24}
                    \path [dotted-edge] (\source) edge [bend left=20] (\dest);

            \foreach \source / \dest in {1/7,7/13,13/19,
                                         2/8,8/14,14/20,
                                         3/9,9/15,15/21,
                                         4/10,10/16,16/22,
                                         5/11,11/17,17/23,
                                         6/12,12/18,18/24}
                    \path [dotted-edge] (\source) -- (\dest);
                    
            \foreach \source / \dest in {1/19,2/20,3/21,4/22,5/23,6/24}
                    \path [dotted-edge] (\source) edge [bend left=20] (\dest);

            \foreach \source / \dest in {2/3,20/21,4/5,22/23}
                    \path [edge] (\source) -- (\dest);
            \foreach \source / \dest in {2/20,3/21,4/22,5/23}
                    \path [edge] (\source) edge [bend left=20] (\dest);

        \end{tikzpicture}
        \caption{Edges of $H_1$ considered in case \ref{case_2_cartesian_c2r_c2s}.}
        \label{fig:thm9-2}
        \end{subfigure}
        \\
        \begin{subfigure}{0.48\textwidth}
        \centering
        \begin{tikzpicture}[thick, scale=0.52]
            \node[vertex] (1) at (0, 0) {};
            \node[vertex] (2) at (2, 0) {};
            \node[vertex] (3) at (4, 0) {};
            \node[vertex] (4) at (6, 0) {};
            \node[vertex] (5) at (8, 0) {};
            \node[vertex] (6) at (10, 0) {};
            
            \node[vertex] (7) at (0, 2) {};
            \node[vertex] (8) at (2, 2) {};
            \node[vertex] (9) at (4, 2) {};
            \node[vertex] (10) at (6, 2) {};
            \node[vertex] (11) at (8, 2) {};
            \node[vertex] (12) at (10, 2) {};
            
            \node[vertex] (13) at (0, 4) {};
            \node[vertex] (14) at (2, 4) {};
            \node[vertex] (15) at (4, 4) {};
            \node[vertex] (16) at (6, 4) {};
            \node[vertex] (17) at (8, 4) {};
            \node[vertex] (18) at (10, 4) {};
            
            \node[vertex] (19) at (0, 6) {};
            \node[vertex] (20) at (2, 6) {};
            \node[vertex] (21) at (4, 6) {};
            \node[vertex] (22) at (6, 6) {};
            \node[vertex] (23) at (8, 6) {};
            \node[vertex] (24) at (10, 6) {};

            \foreach \source / \dest in {1/2,2/3,3/4,4/5,5/6,
                                         7/8,8/9,9/10,10/11,11/12,
                                         13/14,14/15,15/16,16/17,17/18,
                                         19/20,20/21,21/22,22/23,23/24}
                    \path [dotted-edge] (\source) -- (\dest);
                    
            \foreach \source / \dest in {1/6,7/12,13/18,19/24}
                    \path [dotted-edge] (\source) edge [bend left=20] (\dest);

            \foreach \source / \dest in {1/7,7/13,13/19,
                                         2/8,8/14,14/20,
                                         3/9,9/15,15/21,
                                         4/10,10/16,16/22,
                                         5/11,11/17,17/23,
                                         6/12,12/18,18/24}
                    \path [dotted-edge] (\source) -- (\dest);
                    
            \foreach \source / \dest in {1/19,2/20,3/21,4/22,5/23,6/24}
                    \path [dotted-edge] (\source) edge [bend left=20] (\dest);

            \foreach \source / \dest in {7/13,12/18}
                    \path [edge] (\source) -- (\dest);
            \foreach \source / \dest in {7/12,13/18}
                    \path [edge] (\source) edge [bend left=20] (\dest);

        \end{tikzpicture}
        \caption{Edges of $H_1$ considered in case \ref{case_3_cartesian_c2r_c2s}.}
        \label{fig:thm9-3}
        \end{subfigure}
        \hfill
        \begin{subfigure}{0.48\textwidth}
        \centering
        \begin{tikzpicture}[thick, scale=0.52]
            \node[vertex] (1) at (0, 0) {};
            \node[vertex] (2) at (2, 0) {};
            \node[vertex] (3) at (4, 0) {};
            \node[vertex] (4) at (6, 0) {};
            \node[vertex] (5) at (8, 0) {};
            \node[vertex] (6) at (10, 0) {};
            
            \node[vertex] (7) at (0, 2) {};
            \node[vertex] (8) at (2, 2) {};
            \node[vertex] (9) at (4, 2) {};
            \node[vertex] (10) at (6, 2) {};
            \node[vertex] (11) at (8, 2) {};
            \node[vertex] (12) at (10, 2) {};
            
            \node[vertex] (13) at (0, 4) {};
            \node[vertex] (14) at (2, 4) {};
            \node[vertex] (15) at (4, 4) {};
            \node[vertex] (16) at (6, 4) {};
            \node[vertex] (17) at (8, 4) {};
            \node[vertex] (18) at (10, 4) {};
            
            \node[vertex] (19) at (0, 6) {};
            \node[vertex] (20) at (2, 6) {};
            \node[vertex] (21) at (4, 6) {};
            \node[vertex] (22) at (6, 6) {};
            \node[vertex] (23) at (8, 6) {};
            \node[vertex] (24) at (10, 6) {};

            \foreach \source / \dest in {1/2,2/3,3/4,4/5,5/6,
                                         7/8,8/9,9/10,10/11,11/12,
                                         13/14,14/15,15/16,16/17,17/18,
                                         19/20,20/21,21/22,22/23,23/24}
                    \path [dotted-edge] (\source) -- (\dest);
                    
            \foreach \source / \dest in {1/6,7/12,13/18,19/24}
                    \path [dotted-edge] (\source) edge [bend left=20] (\dest);

            \foreach \source / \dest in {1/7,7/13,13/19,
                                         2/8,8/14,14/20,
                                         3/9,9/15,15/21,
                                         4/10,10/16,16/22,
                                         5/11,11/17,17/23,
                                         6/12,12/18,18/24}
                    \path [dotted-edge] (\source) -- (\dest);
                    
            \foreach \source / \dest in {1/19,2/20,3/21,4/22,5/23,6/24}
                    \path [dotted-edge] (\source) edge [bend left=20] (\dest);

            \foreach \source / \dest in {8/9,10/11,14/15,16/17,
                                         8/14,9/15,10/16,11/17}
                    \path [edge] (\source) -- (\dest);

        \end{tikzpicture}
        \caption{Edges of $H_1$ considered in case \ref{case_4_cartesian_c2r_c2s}.}
        \label{fig:thm9-4}
        \end{subfigure}
        \\
        \begin{subfigure}{0.48\textwidth}
        \centering
        \begin{tikzpicture}[thick, scale=0.52]
            \node[vertex] (1) at (0, 0) {};
            \node[vertex] (2) at (2, 0) {};
            \node[vertex] (3) at (4, 0) {};
            \node[vertex] (4) at (6, 0) {};
            \node[vertex] (5) at (8, 0) {};
            \node[vertex] (6) at (10, 0) {};
            
            \node[vertex] (7) at (0, 2) {};
            \node[vertex] (8) at (2, 2) {};
            \node[vertex] (9) at (4, 2) {};
            \node[vertex] (10) at (6, 2) {};
            \node[vertex] (11) at (8, 2) {};
            \node[vertex] (12) at (10, 2) {};
            
            \node[vertex] (13) at (0, 4) {};
            \node[vertex] (14) at (2, 4) {};
            \node[vertex] (15) at (4, 4) {};
            \node[vertex] (16) at (6, 4) {};
            \node[vertex] (17) at (8, 4) {};
            \node[vertex] (18) at (10, 4) {};
            
            \node[vertex] (19) at (0, 6) {};
            \node[vertex] (20) at (2, 6) {};
            \node[vertex] (21) at (4, 6) {};
            \node[vertex] (22) at (6, 6) {};
            \node[vertex] (23) at (8, 6) {};
            \node[vertex] (24) at (10, 6) {};

            \foreach \source / \dest in {1/2,2/3,3/4,4/5,5/6,
                                         7/8,8/9,9/10,10/11,11/12,
                                         13/14,14/15,15/16,16/17,17/18,
                                         19/20,20/21,21/22,22/23,23/24}
                    \path [dotted-edge] (\source) -- (\dest);
                    
            \foreach \source / \dest in {1/6,7/12,13/18,19/24}
                    \path [dotted-edge] (\source) edge [bend left=20] (\dest);

            \foreach \source / \dest in {1/7,7/13,13/19,
                                         2/8,8/14,14/20,
                                         3/9,9/15,15/21,
                                         4/10,10/16,16/22,
                                         5/11,11/17,17/23,
                                         6/12,12/18,18/24}
                    \path [dotted-edge] (\source) -- (\dest);
                    
            \foreach \source / \dest in {1/19,2/20,3/21,4/22,5/23,6/24}
                    \path [dotted-edge] (\source) edge [bend left=20] (\dest);

            \foreach \source / \dest in {1/2,7/8,13/14,19/20,
                                         1/7,13/19,2/8,14/20,
                                         3/4,9/10,15/16,21/22,
                                         3/9,4/10,15/21,16/22,
                                         5/6,11/12,17/18,23/24,
                                         5/11,6/12,17/23,18/24}
                    \path [edge] (\source) -- (\dest);

        \end{tikzpicture}
        \caption{Edges of $H_2$.}
        \label{fig:thm9-5}
        \end{subfigure}
        \caption{Example graph $(C_6 \square C_4, \sigma)$ considered in \Cref{cartesian_c2r_c2s}. Edges of respective graphs are marked with solid lines.}
    \end{figure}

    Now let us proceed to products of cycles, where at least one of the cycles is odd.
    To prove the results, we need to recall a lemma: 
    \begin{lemma}[\cite{classes_one_two}]\label{regular_lemma}
        Let $S$ be a $2r$-regular signed graph. $\chi'(S)=\Delta(G)$ if and only if $S$ admits a decomposition into exactly $r$ spanning edge-disjoint $2$-regular balanced subgraphs. \qed
    \end{lemma}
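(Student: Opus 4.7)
The plan is to prove both directions by matching each color pair $\{i,-i\}$ with a spanning $2$-factor. Since $S$ is $2r$-regular, $\Delta(|S|) = 2r$ and $M_{2r} = \{\pm 1, \ldots, \pm r\}$, so any $2r$-edge-coloring avoids the color $0$.

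For the ($\Leftarrow$) direction, suppose $S = H_1 \cup \cdots \cup H_r$ is such a decomposition. Each $H_i$ is a disjoint union of balanced cycles, so by \Cref{thm:behr-cycle} it admits a $2$-edge-coloring; I would relabel these colorings to use exactly the pair $\{i,-i\}$ and glue them into a function $f \colon I(S) \to M_{2r}$. Edge-disjointness of the $H_i$'s together with the disjointness of the pairs $\{i,-i\}$ across $i$ guarantees that at every vertex the $2r$ incidences (two from each $H_i$) receive $2r$ pairwise distinct colors, so $f$ is a valid $2r$-edge-coloring; combined with $\chi'(S) \geq \Delta(|S|) = 2r$, this gives $\chi'(S) = 2r$.

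For the ($\Rightarrow$) direction, take an optimal edge coloring $f \colon I(S) \to M_{2r}$. For each $i \in \{1, \ldots, r\}$, I would define $H_i$ to be the spanning subgraph whose edges are those whose two incidences are both colored from $\{i,-i\}$. Since $f$ is proper and every vertex has degree $2r$, the $2r$ incidences at each vertex use each color of $M_{2r}$ exactly once; in particular, exactly one incidence at each vertex is colored $i$ and exactly one is colored $-i$, so $H_i$ is $2$-regular. The subgraphs $H_1, \ldots, H_r$ are clearly edge-disjoint and cover $E(S)$.

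The key remaining step, which I expect to be the main obstacle, is verifying that each $H_i$ is balanced. Given a cycle $v_1 v_2 \cdots v_k v_1$ in $H_i$, I would introduce signs $\epsilon_j \in \{\pm 1\}$ with $f(v_j \colon v_j v_{j+1}) = \epsilon_j\, i$. Per-vertex distinctness forces the two incidences at $v_{j+1}$ to have opposite signs, while the rule $f(u \colon uv) = -\sigma(uv) f(v \colon uv)$ pins down the incidence of $v_{j+1}$ coming from $v_j$; combining both yields the recurrence $\epsilon_{j+1} = \sigma(v_j v_{j+1}) \epsilon_j$. Iterating around the cycle and returning to $\epsilon_1$ forces $\prod_{j=1}^{k} \sigma(v_j v_{j+1}) = 1$, so the cycle is balanced. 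This sign-tracking is the only nontrivial calculation; once it is in place, both directions follow.
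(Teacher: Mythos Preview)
Your proof is correct. Note, however, that the paper does not supply its own proof of this lemma: it is quoted from \cite{classes_one_two} and marked with a \qed, so there is no in-paper argument to compare against. Your sign-tracking computation for the balancedness of each $H_i$ is exactly the right idea and is carried out correctly; the recurrence $\epsilon_{j+1} = \sigma(v_j v_{j+1})\epsilon_j$ and the resulting product condition $\prod_j \sigma(v_j v_{j+1}) = 1$ are precisely what is needed.
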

    
    \begin{lemma}\label{odd_num_negative_edges}
        Let $S$ be a $2r$-regular signed graph. If $S$ contains an odd number of negative edges then $\chi'(S) = \Delta(S) + 1$. 
    \end{lemma}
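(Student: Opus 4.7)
The plan is to combine \Cref{regular_lemma} with a simple parity count on negative edges. By that lemma, $\chi'(S) = \Delta(S) = 2r$ would force $S$ to admit a decomposition into $r$ edge-disjoint spanning $2$-regular balanced subgraphs, so it is enough to rule out any such decomposition under the odd-parity hypothesis and then invoke Behr's theorem.

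First I would note that any $2$-regular signed subgraph is a vertex-disjoint union of cycles, and a balanced $2$-regular subgraph is a disjoint union of balanced cycles. Since a cycle is balanced exactly when the product of its edge signs equals $+1$, each balanced cycle contains an even number of negative edges. Summing over its cycles, any $2$-regular balanced signed subgraph therefore carries an even number of negative edges.

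Now suppose for contradiction that $S$ decomposes as $E(S) = E(H_1) \sqcup \ldots \sqcup E(H_r)$ with each $H_i$ spanning, $2$-regular, and balanced. Then the number of negative edges of $S$ equals $\sum_{i=1}^r n_i$, where $n_i$ is the (even) number of negative edges in $H_i$. This sum is even, contradicting the assumption that $S$ has an odd number of negative edges. Hence no such decomposition exists, so by \Cref{regular_lemma} we have $\chi'(S) \neq \Delta(S)$. Combining with Behr's theorem $\chi'(S) \le \Delta(S) + 1$ yields $\chi'(S) = \Delta(S) + 1$.

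There is no real obstacle beyond stating the parity count cleanly; the lemma does all the structural work, and the proof is essentially one paragraph of bookkeeping.
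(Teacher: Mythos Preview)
Your proof is correct and matches the paper's approach essentially line for line: both assume $\chi'(S)=\Delta(S)$, invoke \Cref{regular_lemma} to get a decomposition into balanced $2$-regular pieces, and derive a parity contradiction on the count of negative edges. The only cosmetic difference is that the paper phrases the pigeonhole step as ``some $H_i$ must have an odd number of negative edges, hence contains an unbalanced cycle'' rather than summing even counts to get an even total.
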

    \begin{proof}
        We assume that $\chi'(S) = \Delta(S)$. From \Cref{regular_lemma} it follows that $S$ admits a decomposition into $r$ spanning edge-disjoint $2$-regular graphs such that their connected components are balanced cycles. We observe that at least one of the graphs must contain an odd number of negative edges and at least one of its subgraphs (which is a cycle) must contain an odd number of negative edges. Obviously, such a cycle is not balanced, so we reach a contradiction.
    \end{proof}

    With these prerequsites in hand, we can proceed with the theorem:
    \begin{lemma}\label{cartesian_c2r_c2s1}
        Let $S_1 = (C_{2r}, \sigma_1)$, $S_2 = (C_{2s+1}, \sigma_2)$ and $S = (G, \sigma) = S_1 \square S_2$. $\mathcal{C}(S) = 1/2$.
    \end{lemma}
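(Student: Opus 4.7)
The plan is to establish $\mathcal{C}(S) = 1/2$ via matching upper and lower bounds. First observe that the underlying graph $|S| = C_{2r}\square C_{2s+1}$ is $4$-regular, so $\Delta(S)=4$, and $m(S) = 4r(2s+1)$. For the upper bound $\mathcal{C}(S) \le 1/2$, I would invoke \Cref{odd_num_negative_edges}: any signature on $|S|$ with an odd number of negative edges forces $\chi' = \Delta + 1$, hence is not $\Delta$-edge-colorable. A standard parity count shows that exactly $2^{m(S)-1}$ of the $2^{m(S)}$ signatures have this property, so at most half of the signatures on $|S|$ are $\Delta$-edge-colorable.

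For the matching lower bound I would show that every signature $\sigma$ on $|S|$ with an even number of negative edges is $\Delta$-edge-colorable. By \Cref{regular_lemma} this reduces to constructing a decomposition of $(|S|, \sigma)$ into two edge-disjoint spanning $2$-regular subgraphs $H_1$ and $H_2$ whose cycles are all balanced. I would start from the canonical split $H_1 =$ all horizontal edges (a union of $2r$ cycles of length $2s+1$) and $H_2 =$ all vertical edges (a union of $2s+1$ cycles of length $2r$). Let $U_1, U_2$ denote the number of unbalanced cycles in $H_1, H_2$ respectively; since $U_i$ has the same parity as the number of negative horizontal (resp.\ vertical) edges, the hypothesis $k(\sigma) \equiv 0 \pmod 2$ translates to $U_1 + U_2 \equiv 0 \pmod 2$. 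I would then apply local ``$4$-cycle swaps'' on adjacent row- and column-pairs: each such swap exchanges the four edges of a single $2\times 2$ sub-block between $H_1$ and $H_2$, thereby merging two cycles of $H_1$ and simultaneously two cycles of $H_2$. A direct parity calculation of the merged cycles shows that each swap preserves the total parity of unbalanced cycles, so a suitable sequence should drive this count to zero.

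The main obstacle is organizing the swap sequence so that \emph{all} unbalanced cycles vanish in $H_1$ and $H_2$ simultaneously, rather than merely shifting defects between them. I would split into the two subcases $U_1 \equiv U_2 \equiv 0$ and $U_1 \equiv U_2 \equiv 1 \pmod 2$: in the even-even case one pairs up unbalanced rows via swaps within a fixed column-pair and independently pairs up unbalanced columns; in the odd-odd case a single carefully chosen swap first couples one unbalanced row with one unbalanced column, reducing to the previous situation. Combining the upper and lower bounds yields $\mathcal{C}(S) = 1/2$.
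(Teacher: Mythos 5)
Your proposal proves a different (and strictly stronger) statement than the paper does, and the part that makes it stronger is exactly the part that is not actually proved. The paper counts only the $2^{m(S_1)+m(S_2)}$ \emph{product} signatures $\sigma$ arising from pairs $(\sigma_1,\sigma_2)$: it splits into four cases according to the balance of $S_1$ and $S_2$, kills the two mixed-parity cases with \Cref{odd_num_negative_edges}, handles the both-balanced case by \Cref{cartesian_delta_delta}, and handles the case ($S_1$ balanced, $S_2$ unbalanced) by one explicit decomposition into balanced cycles; the ratio $2/4$ then refers to these four equiprobable cases. You instead count all $2^{m(G)}$ signatures of the underlying torus, so your lower bound requires that \emph{every} signature with an even number of negative edges be $\Delta$-edge-colorable --- a claim about vastly more switching classes than the product signatures reach, which does not follow from anything in the paper and whose truth you never establish. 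Your upper bound (half of all signatures have an odd number of negative edges and are excluded by \Cref{odd_num_negative_edges}) is fine for your interpretation, but the two halves of your argument are then about a different denominator than the paper's.

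The concrete gap is in the swap argument for the lower bound. A $2\times 2$ swap simultaneously merges a pair of row-cycles in $H_1$ and a pair of column-cycles in $H_2$, and the balance of each merged cycle is the product of the balances of the two cycles being merged \emph{and} of the $4$-cycle used for the swap. So you cannot ``pair up unbalanced rows via swaps within a fixed column-pair and independently pair up unbalanced columns'': every row-fixing swap disturbs a column pair (and repeated swaps inside the same fixed column pair re-split what an earlier swap merged, so after the first swap the objects being merged are no longer single rows or columns and the ``direct parity calculation'' no longer applies to them). Moreover, for an arbitrary even signature the $4$-cycles can be unbalanced in an arbitrary pattern, so the sign contribution of each swap is not under your control; in the odd--odd case you assert the existence of a suitably placed unbalanced square without proof; and you never verify that the process terminates with \emph{both} $2$-factors entirely balanced rather than merely with the right parities. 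In the paper's setting all $4$-cycles of a product signature are balanced and all rows are copies of the same signed cycle, which is precisely why its single explicit merge-adjacent-rows construction works there; your generalization to arbitrary even signatures would need a genuinely new argument (or a counterexample check), neither of which the sketch supplies.
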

    
    \begin{proof}
        We observe that $S$ is a $4$-regular graph. Let us consider $4$ cases:
        \begin{enumerate}
            \item Both $S_1$, $S_2$ are balanced. Then $\chi'(S_1) = 2$ and $\chi'(S_2) = 2$ so it follows from \Cref{cartesian_delta_delta} that $\chi'(S) = \Delta(S)$.

            \item Both $S_1$, $S_2$ are unbalanced. $S_1$ and $S_2$ have $2k + 1$ and $2l + 1$ negative edges, respectively. $S$ has $(2s + 1)(2k + 1) + 2r(2l + 1) = 2(2ks + s + k + 2lr + r) + 1$ negative edges. Since $S$ is $4$-regular, it follows from \Cref{odd_num_negative_edges} that $\chi'(S) = \Delta(S) + 1$.

            \item $S_1$ is unbalanced and $S_2$ is balanced. $S_1$ and $S_2$ have $2k + 1$ and $2l$ negative edges, respectively. $S$ has $(2s + 1)(2k + 1) + 2r2l = 2(2ks + s + k + 2lr) + 1$ negative edges. Again since $S$ is $4$-regular, it follows from \Cref{odd_num_negative_edges} that $\chi'(S) = \Delta(S) + 1$.

            \item\label{case4_cartesian_c2r_c2s1} $S_1$ is balanced and $S_2$ is unbalanced. Let $V(S_1) = \{v_1^1, \ldots, v_{2r}^1\}$ and $V(S_2) = \{v_1^2, \ldots, v_{2s+1}^2\}$.
            
            We will show the decomposition of $S$ into graphs $H_1$, $H_2$ such that both of them can be colored using 2 colors regardless of the edge signs. We first describe the structure of $H_1$. Obviously, $V(H_1) = V(S)$. Graph $H_1$ contains edges:
            \begin{enumerate}
                \item $e_{1}^i = (v_{2i - 1}^1, v_{2s+1}^2) (v_{2i - 1}^1, v_{1}^2)$,
                \item $e_{2}^i = (v_{2i}^1, v_{2s+1}^2) (v_{2i - 1}^1, v_{2s+1}^2)$,
                \item $e_{3}^i = (v_{2i}^1, v_{1}^2) (v_{2i}^1, v_{2s+1}^2)$,
                \item $e_{4}^i =(v_{2i - 1}^1, v_{2s}^2) (v_{2i}^1, v_{2s}^2)$,
                \item ${e'}_{1}^i = (v_{2i - 1}^1, v_{1}^2) (v_{2i - 1}^1, v_{2}^2), \ldots, {e'}_{2s - 1}^i = (v_{2i - 1}^1, v_{2s - 1}^2) (v_{2i - 1}^1, v_{2s}^2)$,
                \item ${e''}_{1}^i = (v_{2i}^1, v_{2}^2) (v_{2i}^1, v_{1}^2), \ldots, {e''}_{2s - 1}^i = (v_{2i}^1, v_{2s}^2) (v_{2i}^1, v_{2s - 1}^2)$
            \end{enumerate}
            for $1 \leq i \leq r$. We observe that those edges span cycles $\gamma_i = (e_{1}^i, e_{2}^i, e_{3}^i, {e''}_{1}^i,$ $\ldots, {e''}_{2s - 1}^i, e_{4}^i, {e'}_{2s}^i, \ldots, {e'}_{1}^i)$ for $1 \leq i \leq r$. All of the cycles $\gamma_i$ are vertex-disjoint and are balanced since $\sigma(e_{1}^i) = \sigma(e_{3}^i)$, $\sigma(e_{2}^i) = \sigma(e_{4}^i)$ and $\sigma({e'}_{j}^i) = \sigma({e''}_{j}^i)$ for $1 \leq j \leq 2s - 1$. It follows that graph $H_1$ has a decomposition into balanced cycles, so it can be colored using colors $\pm1$.

            Graph $H_2$ is such a graph that $V(H_2) = V(S)$ and $E(H_2) = E(S) \setminus E(H_1)$. It follows that graph it contains:
            \begin{enumerate}
                \item Edges $e_{1}^i = (v_{1}^1, v_{i}^2) (v_{2}^1, v_{i}^2), \ldots, e_{2r - 1}^i = (v_{2r - 1}^1, v_{i}^2) (v_{2r}^1, v_{i}^2)$, $e_{2r}^i = (v_{2r}^1, v_{i}^2) (v_{1}^1, v_{i}^2)$ for $1 \leq i \leq 2s - 1$. The edges span $2s - 1$ vertex-disjoint cycles and they are balanced since they are the copies of graph $S_1$ in $S$.

                \item Edges $e_{1}' = (v_{1}^1, v_{2s}^2) (v_{1}^1, v_{2s + 1}^2)$, $e_{2}' = (v_{1}^1, v_{2s + 1}^2) (v_{2r}^1, v_{2s + 1}^2)$, $e_{3}' = (v_{2r}^1, v_{2s + 1}^2) (v_{2r}^1, v_{2s}^2)$, $e_{4}' = (v_{2r}^1, v_{2s}^2) (v_{1}^1, v_{2s}^2)$. Obviously, the edges span a cycle and it is balanced since $\sigma(e_{1}') = \sigma(e_{3}')$, $\sigma(e_{2}') = \sigma(e_{4}')$.

                \item Edges $e_{1}^i = (v_{2i}^1, v_{2s}^2) (v_{2i + 1}^1, v_{2s}^2)$, $e_{2}^i = (v_{2i + 1}^1, v_{2s}^2) (v_{2i + 1}^1, v_{2s + 1}^2)$, $e_{3}^i = (v_{2i + 1}^1, v_{2s + 1}^2) (v_{2i}^1, v_{2s + 1}^2)$, $e_{4}^i = (v_{2i}^1, v_{2s + 1}^2) (v_{2i}^1, v_{2s}^2)$ for $1 \leq i \leq s - 1$. The edges span $s - 1$ vertex-disjoint cycles and they are balanced since $\sigma(e_{1}^i) = \sigma(e_{3}^i)$, $\sigma(e_{2}^i) = \sigma(e_{4}^i)$ for $1 \leq i \leq s - 1$.
            \end{enumerate}

            Clearly, the cycles are vertex-disjoint, and graph $H_2$ has a decomposition into balanced cycles so $H_2$ can be colored using colors $\pm2$. It is easy to observe that graphs $H_1$, $H_2$ are edge-disjoint and are a decomposition of graph $S$. It follows that $S$ can be colored using colors $\pm1$, $\pm2$, so $\chi'(S) = \Delta(S)$.

            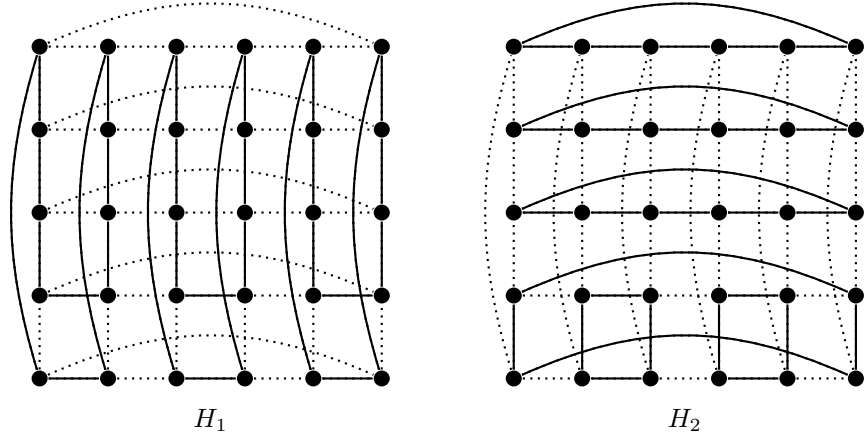
\begin{figure}
                \centering
                \begin{tikzpicture}[thick, xscale=0.45, yscale=0.55]
                    \node[graph-label] (label) at (5,-1) {$H_1$};
                    \node[vertex] (1) at (0, 0) {};
                    \node[vertex] (2) at (2, 0) {};
                    \node[vertex] (3) at (4, 0) {};
                    \node[vertex] (4) at (6, 0) {};
                    \node[vertex] (5) at (8, 0) {};
                    \node[vertex] (6) at (10, 0) {};
                    
                    \node[vertex] (7) at (0, 2) {};
                    \node[vertex] (8) at (2, 2) {};
                    \node[vertex] (9) at (4, 2) {};
                    \node[vertex] (10) at (6, 2) {};
                    \node[vertex] (11) at (8, 2) {};
                    \node[vertex] (12) at (10, 2) {};
                    
                    \node[vertex] (13) at (0, 4) {};
                    \node[vertex] (14) at (2, 4) {};
                    \node[vertex] (15) at (4, 4) {};
                    \node[vertex] (16) at (6, 4) {};
                    \node[vertex] (17) at (8, 4) {};
                    \node[vertex] (18) at (10, 4) {};
                    
                    \node[vertex] (19) at (0, 6) {};
                    \node[vertex] (20) at (2, 6) {};
                    \node[vertex] (21) at (4, 6) {};
                    \node[vertex] (22) at (6, 6) {};
                    \node[vertex] (23) at (8, 6) {};
                    \node[vertex] (24) at (10, 6) {};
                    
                    \node[vertex] (25) at (0, 8) {};
                    \node[vertex] (26) at (2, 8) {};
                    \node[vertex] (27) at (4, 8) {};
                    \node[vertex] (28) at (6, 8) {};
                    \node[vertex] (29) at (8, 8) {};
                    \node[vertex] (30) at (10, 8) {};

                    \foreach \source / \dest in {1/2,2/3,3/4,4/5,5/6,
                                                 7/8,8/9,9/10,10/11,11/12,
                                                 13/14,14/15,15/16,16/17,17/18,
                                                 19/20,20/21,21/22,22/23,23/24,
                                                 25/26,26/27,27/28,28/29,29/30}
                            \path [dotted-edge] (\source) -- (\dest);
                            
                    \foreach \source / \dest in {1/6,7/12,13/18,19/24,25/30}
                            \path [dotted-edge] (\source) edge [bend left=20] (\dest);

                    \foreach \source / \dest in {1/7,7/13,13/19,19/25,
                                                 2/8,8/14,14/20,20/26,
                                                 3/9,9/15,15/21,21/27,
                                                 4/10,10/16,16/22,22/28,
                                                 5/11,11/17,17/23,23/29,
                                                 6/12,12/18,18/24,24/30}
                            \path [dotted-edge] (\source) -- (\dest);
                            
                    \foreach \source / \dest in {1/25,2/26,3/27,4/28,5/29,6/30}
                            \path [dotted-edge] (\source) edge [bend left=20] (\dest);

                    \foreach \source / \dest in {1/2,7/8,
                                                 7/13,13/19,19/25,
                                                 8/14,14/20,20/26}
                            \path [edge] (\source) -- (\dest);
                    \foreach \source / \dest in {3/4,9/10,
                                                 9/15,15/21,21/27,
                                                 10/16,16/22,22/28}
                            \path [edge] (\source) -- (\dest);
                    \foreach \source / \dest in {5/6,11/12,
                                                 11/17,17/23,23/29,
                                                 12/18,18/24,24/30}
                            \path [edge] (\source) -- (\dest);
                    \foreach \source / \dest in {1/25,2/26,3/27,4/28,5/29,6/30}
                            \path [edge] (\source) edge [bend left=20] (\dest);

                \end{tikzpicture}
                \hspace{1cm}%
                \begin{tikzpicture}[thick, xscale=0.45, yscale=0.55]
                    \node[graph-label] (label) at (5,-1) {$H_2$};
                    \node[vertex] (1) at (0, 0) {};
                    \node[vertex] (2) at (2, 0) {};
                    \node[vertex] (3) at (4, 0) {};
                    \node[vertex] (4) at (6, 0) {};
                    \node[vertex] (5) at (8, 0) {};
                    \node[vertex] (6) at (10, 0) {};
                    
                    \node[vertex] (7) at (0, 2) {};
                    \node[vertex] (8) at (2, 2) {};
                    \node[vertex] (9) at (4, 2) {};
                    \node[vertex] (10) at (6, 2) {};
                    \node[vertex] (11) at (8, 2) {};
                    \node[vertex] (12) at (10, 2) {};
                    
                    \node[vertex] (13) at (0, 4) {};
                    \node[vertex] (14) at (2, 4) {};
                    \node[vertex] (15) at (4, 4) {};
                    \node[vertex] (16) at (6, 4) {};
                    \node[vertex] (17) at (8, 4) {};
                    \node[vertex] (18) at (10, 4) {};
                    
                    \node[vertex] (19) at (0, 6) {};
                    \node[vertex] (20) at (2, 6) {};
                    \node[vertex] (21) at (4, 6) {};
                    \node[vertex] (22) at (6, 6) {};
                    \node[vertex] (23) at (8, 6) {};
                    \node[vertex] (24) at (10, 6) {};
                    
                    \node[vertex] (25) at (0, 8) {};
                    \node[vertex] (26) at (2, 8) {};
                    \node[vertex] (27) at (4, 8) {};
                    \node[vertex] (28) at (6, 8) {};
                    \node[vertex] (29) at (8, 8) {};
                    \node[vertex] (30) at (10, 8) {};

                    \foreach \source / \dest in {1/2,2/3,3/4,4/5,5/6,
                                                 7/8,8/9,9/10,10/11,11/12,
                                                 13/14,14/15,15/16,16/17,17/18,
                                                 19/20,20/21,21/22,22/23,23/24,
                                                 25/26,26/27,27/28,28/29,29/30}
                            \path [dotted-edge] (\source) -- (\dest);
                            
                    \foreach \source / \dest in {1/6,7/12,13/18,19/24,25/30}
                            \path [dotted-edge] (\source) edge [bend left=20] (\dest);

                    \foreach \source / \dest in {1/7,7/13,13/19,19/25,
                                                 2/8,8/14,14/20,20/26,
                                                 3/9,9/15,15/21,21/27,
                                                 4/10,10/16,16/22,22/28,
                                                 5/11,11/17,17/23,23/29,
                                                 6/12,12/18,18/24,24/30}
                            \path [dotted-edge] (\source) -- (\dest);
                            
                    \foreach \source / \dest in {1/25,2/26,3/27,4/28,5/29,6/30}
                            \path [dotted-edge] (\source) edge [bend left=20] (\dest);

                    \foreach \source / \dest in {13/14,14/15,15/16,16/17,17/18,
                                                 19/20,20/21,21/22,22/23,23/24,
                                                 25/26,26/27,27/28,28/29,29/30,
                                                 1/7,6/12}
                            \path [edge] (\source) -- (\dest);
                    \foreach \source / \dest in {2/3,3/9,8/9,2/8,
                                                 4/5,5/11,10/11,4/10}
                            \path [edge] (\source) -- (\dest);
                    \foreach \source / \dest in {1/6,7/12,13/18,19/24,25/30}
                            \path [edge] (\source) edge [bend left=20] (\dest);

                \end{tikzpicture}
                \caption{Example graph $(C_6 \square C_5, \sigma)$ considered in \Cref{cartesian_c2r_c2s1}. Edges of $H_1$ and $H_2$ are marked with solid lines.}
            \end{figure}
        \end{enumerate}
        The case $(1)$ corresponds to exactly $1/4$ of possible signatures of $S$ and so the case $(4)$, so it follows that $\mathcal{C}(S) = 2/4 = 1/2$.
    \end{proof}

    \begin{lemma}\label{cartesian_cycle_cycle_switching}
        Let $S_1 = (C_{2r+1}, \sigma_1)$, $S_2 = (C_{2s+1}, \sigma_2)$ and $S = (G, \sigma) = S_1 \square S_2$. If both $S_1$, $S_2$ are unbalanced, then there exists a signed graph $S''$ such that it is a switching equivalent of $S$ and all of its edges are negative.
    \end{lemma}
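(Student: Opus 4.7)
The plan is to use the classical characterization that two signings of the same underlying graph are switching equivalent if and only if every cycle has the same product of edge-signs in both signings. Taking the candidate $S'' = (G, \sigma'')$ with $\sigma'' \equiv -1$, a cycle of length $\ell$ has product $(-1)^\ell$ in $\sigma''$, so the task reduces to showing that in $S$ every cycle $C$ satisfies $\prod_{e \in C}\sigma(e) = (-1)^{|C|}$.

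Because the map sending an element of the binary cycle space of $G$ to the product of its edge-signs is a homomorphism into $\{\pm 1\}$, it is enough to verify this identity on any spanning set of the cycle space. I would take the following: all $(2r+1)(2s+1)$ ``unit squares'' on vertex sets $\{(v_i^1, v_j^2), (v_{i+1}^1, v_j^2), (v_{i+1}^1, v_{j+1}^2), (v_i^1, v_{j+1}^2)\}$ (indices cyclic modulo $2r+1$ and $2s+1$), together with one copy of $S_1$ (fixing the second coordinate, say, at $v_1^2$) and one copy of $S_2$ (fixing the first coordinate at $v_1^1$). A short dimension check confirms this is spanning: since $G$ is $4$-regular, its cycle space has dimension $m - n + 1 = (2r+1)(2s+1) + 1$, while the listed cycles number $(2r+1)(2s+1) + 2$ and admit exactly one dependency (the $\mathbb{F}_2$-sum of all unit squares is empty, as every edge of $G$ lies in exactly two of them), yielding precisely $(2r+1)(2s+1) + 1$ independent cycles.

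From the definition of the Cartesian product, each unit square has two opposite edges of sign $\sigma_1(v_i^1 v_{i+1}^1)$ and two opposite edges of sign $\sigma_2(v_j^2 v_{j+1}^2)$, so its cycle product equals $\sigma_1(v_i^1 v_{i+1}^1)^2 \sigma_2(v_j^2 v_{j+1}^2)^2 = 1 = (-1)^4$. The chosen copy of $S_1$ inherits the signing $\sigma_1$, so its cycle product is $\prod_{e \in E(S_1)}\sigma_1(e) = -1$ since $S_1$ is unbalanced, matching $(-1)^{2r+1}$; similarly the chosen copy of $S_2$ has product $-1 = (-1)^{2s+1}$. Hence every spanning cycle (and therefore every cycle) of $G$ has product $(-1)^{|C|}$ in $S$, and $S$ is switching equivalent to the all-negative signed graph $S''$.

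The only moderately delicate point is justifying that the listed cycles span the binary cycle space of the toroidal grid, which the dimension count above handles; the three sign calculations are then purely mechanical. It is worth noting that both unbalancedness hypotheses are used precisely in the two ``long'' spanning cycles: if either $S_i$ were balanced, the corresponding copy would have product $+1$ rather than the required $(-1)^{\mathrm{odd}} = -1$, and no such $S''$ would exist.
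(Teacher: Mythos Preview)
Your argument is correct and takes a genuinely different route from the paper. The paper proceeds constructively: since an unbalanced odd cycle is switching equivalent to the all-negative odd cycle, it picks a switching set $X^1 \subseteq V(S_1)$ achieving this, lifts it to $X^1 \times V(S_2) \subseteq V(S)$, and checks that switching on this lifted set makes all $S_1$-direction edges negative while leaving all $S_2$-direction edges untouched (because for any such edge both endpoints project to the same vertex of $S_1$); it then repeats the process with $S_2$. This is entirely elementary and produces the switching set explicitly. Your approach instead invokes the Zaslavsky-type characterization of switching equivalence via cycle signs and reduces the problem to checking $\prod_{e\in C}\sigma(e)=(-1)^{|C|}$ on a generating set of the $\mathbb{F}_2$-cycle space of the toroidal grid; the unbalancedness hypotheses are consumed cleanly by the two ``long'' generators. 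Your method is more conceptual and makes the role of the hypotheses transparent, while the paper's method is more hands-on and avoids any appeal to cycle-space machinery.

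One point to tighten: your ``short dimension check'' shows that the number of listed cycles minus one known dependency equals $\dim(\text{cycle space})$, but that alone does not prove the cycles span; you must also rule out any further dependency. The cleanest fix is to observe that for each $j$ the symmetric difference of the $S_1$-copies at heights $v_j^2$ and $v_{j+1}^2$ is exactly the sum of one row of unit squares, so all $S_1$-copies are congruent modulo the square-span (and likewise for the $S_2$-copies); then, for a spanning tree consisting of one horizontal path at height $v_1^2$ together with all vertical paths, each fundamental cycle is visibly a sum of unit squares together with at most one copy of $S_1$ and at most one of $S_2$. Alternatively, exhibit the two $\mathbb{F}_2$-valued winding-number homomorphisms on the cycle space that vanish on every unit square but not on the respective long cycle; either argument certifies that there is exactly one dependency among your generators.
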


    \begin{proof}
        Let $V(S_1) = \{ v_1^1, \ldots, v_{2r+1}^1 \}$ and $V(S_2) = \{ v_1^2, \ldots, v_{2s+1}^2 \}$.

        Since $S_1$ is unbalanced it can be switched to graph $S_1'$ such that all its edges are negative. Let $X^1$ denote one of the possible sets of vertices that are switched to get $S_1$ to $S_1'$. Let $X_i^1 = \{ (u,\allowbreak v_i^2) \colon u \in X^1 \}$, for $1 \leq i \leq 2s+1$. We note that for all $i$ it holds that $X_i^1 \subseteq V(S)$. Let $S' = (G, \sigma')$ be a graph formed from $S$ after switching all the vertices from $\bigcup\limits_{i=1}^{2s+1} X_i^1$. It is easy to observe that all the edges $(v_i^1,\allowbreak v_j^2) (v_k^1,\allowbreak v_j^2)$, $i \neq k$ of $S'$ are negative.
        
        We can also show that for all the edges $e = (v_i^1,\allowbreak v_j^2) (v_i^1,\allowbreak v_k^2)$ ($j \neq k$) it holds that $\sigma'(e) = \sigma(e)$. We prove that by contradiction. Let us assume that there is an edge $e = (v_i^1,\allowbreak v_j^2) (v_i^1,\allowbreak v_k^2)$ such that $\sigma'(e) \neq \sigma(e)$. It follows that one of the below cases is true:
        \begin{enumerate}
            \item $(v_i^1,\allowbreak v_j^2) \in X_j^1$ and $(v_i^1,\allowbreak v_k^2) \notin X_k^1$. It follows from the first statement that $v_i^1 \in X^1$ and from the second one that $v_i^1 \notin X^1$, a contradiction.
            \item $(v_i^1,\allowbreak v_j^2) \notin X_j^1$ and $(v_i^1,\allowbreak v_k^2) \in X_k^1$. Clearly, a contradiction.
        \end{enumerate}
        It follows that edge $e$ changed its sign either zero or two times when switching graph $S$ to $S'$ so the sign did not change. It follows that no edge $(v_i^1,\allowbreak v_j^2)$ $(v_i^1,\allowbreak v_k^2)$ ($j \neq k$) changed the sign.

        Analogously, we can define sets $X^2 \subseteq V(S_2) $, $X_1^2$, \ldots, $X_{2r+1}^2$ and show that switching vertices $\bigcup\limits_{i=1}^{2r+1} X_i^2$ in $S'$ changes all the signs of edges in all copies of graph $S_2$ to negative while not changing any signs of all the other edges so it results in a desired signed graph $S''$ with all the edges negative. And since being a switching equivalent is a transitive relation, $S$ and $S''$ are switching equivalent.
    \end{proof}
    
    \begin{lemma}\label{cartesian_c2r1_c2s1}
        Let $S_1 = (C_{2r+1},\allowbreak \sigma_1)$, $S_2 = (C_{2s+1},\allowbreak \sigma_2)$ and $S = (C_{2r+1} \square C_{2s+1},$ $\sigma) = S_1 \square S_2$. Then, $\mathcal{C}(S) = 1/4$.
    \end{lemma}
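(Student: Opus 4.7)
The plan is to partition the $2^{2r+1} \cdot 2^{2s+1}$ possible signatures of $S$ according to which of $S_1$ and $S_2$ is balanced, handle each of the four combinations separately, and then tally. Since a signed cycle $C_n$ is balanced iff the number of negative edges is even, exactly half of the signatures on $C_n$ make it balanced; hence each of the four cases "$S_1$ balanced / unbalanced" $\times$ "$S_2$ balanced / unbalanced" accounts for exactly $1/4$ of the signatures of $S$. We observe throughout that $\Delta(S) = 4$ and that $S$ has $(2s+1)\cdot|\{e \in E(S_1) \colon \sigma_1(e) = -1\}| + (2r+1)\cdot|\{e \in E(S_2) \colon \sigma_2(e) = -1\}|$ negative edges.

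In the first case, where both $S_1$ and $S_2$ are balanced, both are $\Delta$-edge-colorable (using $2$ colors each), so \Cref{cartesian_delta_delta} applies and yields $\chi'(S) = \Delta(S)$. In the two "mixed" cases (exactly one of $S_1$, $S_2$ unbalanced), the parity of the total number of negative edges in $S$ is odd: writing the negative edge counts of $S_1$ and $S_2$ as $2k+\varepsilon_1$ and $2l+\varepsilon_2$ with $\varepsilon_i \in \{0,1\}$, one of $(2s+1)(2k+\varepsilon_1)$ and $(2r+1)(2l+\varepsilon_2)$ is odd and the other even. Then \Cref{odd_num_negative_edges} forces $\chi'(S) = \Delta(S)+1$ in those two cases.

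The main obstacle is the remaining case, where both $S_1$ and $S_2$ are unbalanced. Here the parity argument fails (odd + odd = even), so we exploit \Cref{cartesian_cycle_cycle_switching}: $S$ is switching equivalent to the signed graph $\tilde S$ on the same underlying graph in which every edge is negative, and switching preserves $\chi'$. Hence it suffices to show $\chi'(\tilde S) = 5$. Suppose toward contradiction that $\chi'(\tilde S) = 4$; by \Cref{regular_lemma}, since $\tilde S$ is $4$-regular, it admits a decomposition into two spanning edge-disjoint $2$-regular balanced subgraphs. In an all-negative signed graph, a cycle is balanced if and only if its length is even, so each of these two $2$-factors is a vertex-disjoint union of even cycles, which would cover an even number of vertices. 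But $|V(S)| = (2r+1)(2s+1)$ is odd, a contradiction; hence $\chi'(S) = \Delta(S)+1$ in this case too.

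Combining the four cases, only the first (both balanced) gives $\chi'(S) = \Delta(S)$, so $\mathcal{C}(S) = 1/4$, as claimed.
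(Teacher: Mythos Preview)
Your proof is correct and follows the paper's approach almost verbatim: the same four-case split, the same use of \Cref{cartesian_delta_delta} in the balanced/balanced case, the same parity count combined with \Cref{odd_num_negative_edges} in the mixed cases, and the same reduction via \Cref{cartesian_cycle_cycle_switching} plus \Cref{regular_lemma} in the unbalanced/unbalanced case. The one cosmetic difference is in that last case: the paper derives the contradiction by observing that each spanning $2$-regular factor has $n(S')=(2r+1)(2s+1)$ edges, all negative, hence an odd number of negative edges, and then invokes \Cref{odd_num_negative_edges} once more; you instead note directly that an all-negative balanced cycle is even, so a balanced $2$-factor would cover an even number of vertices, contradicting $|V(S)|$ odd---this is the same parity obstruction phrased slightly more cleanly.
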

    
    \begin{proof}
        We observe that $S$ is $4$-regular. We consider $3$ possible cases:
        \begin{enumerate}
            \item\label{cartesian_c2r1_c2s1_case1} Both $S_1$, $S_2$ are balanced. Then $\chi'(S_1) = 2 = \Delta(S_1)$ and $\chi'(S_2) = 2 = \Delta(S_2)$. It follows from \Cref{cartesian_delta_delta} that $\chi'(S) = \Delta(S)$.
            \item\label{cartesian_c2r1_c2s1_case2} Exactly one of $S_1$, $S_2$ is balanced. Without loss of generality, we assume $S_1$ is balanced and $S_2$ is unbalanced, so $S_1$ has $2k$ negative edges and $S_2$ has $2l+1$ of them. We observe that $S$ contains exactly $2k (2s + 1) + (2l+1) (2r + 1) =  2(2ks + k + 2lr + l + r) + 1$ negative edges. It follows from \Cref{odd_num_negative_edges} that $\chi'(S) = \Delta(S) + 1$.
            \item\label{cartesian_c2r1_c2s1_case3} Both $S_1$, $S_2$ are unbalanced. It follows from \Cref{cartesian_cycle_cycle_switching} that $S$ can be switched to graph $S'$ with all edges negative. We assume that $S'$ can be colored using $\Delta(S')$ colors. It follows from \Cref{regular_lemma} that $S'$ has a decomposition into exactly $2$ spanning edge-disjoint $2$-regular balanced subgraphs $H_1$, $H_2$. We observe that $n(S') = (2r + 1) (2s + 1) = 2(2rs + r + s) + 1$ and $m(H_1) = m(H_2) = n(S')$. Graphs $H_1$, $H_2$ have odd numbers of edges. All the edges of $S'$ are negative so both $H_1$ and $H_2$ also have odd numbers of negative edges. Thus it follows from \Cref{odd_num_negative_edges} that $\chi'(H_1) = \Delta(H_1) + 1$ and $\chi'(H_2) = \Delta(H_2) + 1$, so none of $H_1$, $H_2$ are balanced, a contradiction. So $S'$ does not have such a decomposition into $H_1$, $H_2$ and $\chi'(S') = \Delta(S') + 1$. Since $S$ and $S'$ are switching equivalent, $\chi'(S) = \Delta(S) + 1$.
        \end{enumerate}
        The first case corresponds to exactly $1/4$ of possible signatures of $S$ and it is the only case with $\chi'(S)$ being equal to $\Delta(S)$, so it follows that $\mathcal{C}(S) = 1/4$.
    \end{proof}

    Now we covered all the possible Cartesian products of a signed cycle and a signed path and can provide a general theorem:

    \begin{theorem}\label{cartesian_cycles}
        Let $S_1 = (C_{r},\allowbreak \sigma_1)$, $S_2 = (C_{s},\allowbreak \sigma_2)$ be signed cycles and $S = S_1 \square S_2$. Then
        \begin{displaymath}
            \mathcal{C}(S) = 
            \begin{cases} 
                $1$\text{,} & \text{if both } r \text{, } s \text{ are even}\text{,} \\
                $1/2$\text{,} & \text{if one of } r \text{, } s \text{ is even and one is odd}\text{,} \\
                $1/4$\text{,} & \text{if both } r \text{, } s \text{ are odd}\text{.}
            \end{cases}
        \end{displaymath}
    \end{theorem}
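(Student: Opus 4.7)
The plan is to observe that this theorem is really just a consolidation of the three lemmas immediately preceding it, each one handling one of the parity cases for $(r,s)$. So the proof essentially amounts to matching each case of the theorem's case distinction to the corresponding lemma and invoking it.

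First I would handle the case when both $r$ and $s$ are even. Writing $r = 2r'$, $s = 2s'$, we have $S = (C_{2r'} \square C_{2s'}, \sigma)$, and \Cref{cartesian_c2r_c2s} states that $\chi'(S) = \Delta(S)$ for \emph{every} signature. Since every signature yields a $\Delta$-edge-colorable graph, the class ratio counts all $2^{m(S)}$ signatures and we get $\mathcal{C}(S) = 1$.

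Next I would deal with the mixed parity case. Without loss of generality $r$ is even and $s$ is odd (the other sub-case is symmetric because $S_1 \square S_2$ and $S_2 \square S_1$ have the same chromatic index up to the obvious isomorphism). Then $S = (C_{2r'} \square C_{2s'+1}, \sigma)$, and \Cref{cartesian_c2r_c2s1} directly gives $\mathcal{C}(S) = 1/2$.

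Finally for both $r$, $s$ odd, \Cref{cartesian_c2r1_c2s1} gives $\mathcal{C}(S) = 1/4$. Putting the three cases together yields the stated formula. The only subtle point worth mentioning explicitly is the symmetry argument in the mixed case; beyond that the proof is essentially a citation combined with a case split, so there is no real obstacle.

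\begin{proof}[Proof of theorem \ref{cartesian_cycles}]
    If both $r$ and $s$ are even, then by \Cref{cartesian_c2r_c2s} we have $\chi'(S) = \Delta(S)$ for every signature $\sigma$, so every one of the $2^{m(S)}$ possible signatures is counted and $\mathcal{C}(S) = 1$.

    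If exactly one of $r$, $s$ is even (say, without loss of generality, $r$ is even and $s$ is odd, the other case being symmetric since $S_1 \square S_2 \cong S_2 \square S_1$), then \Cref{cartesian_c2r_c2s1} gives $\mathcal{C}(S) = 1/2$.

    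Finally, if both $r$ and $s$ are odd, \Cref{cartesian_c2r1_c2s1} gives $\mathcal{C}(S) = 1/4$. This covers all possibilities and yields the claimed values.
\end{proof}
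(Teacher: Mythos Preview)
Your proposal is correct and matches the paper's own proof almost exactly: the paper simply cites \Cref{cartesian_c2r_c2s}, \Cref{cartesian_c2r_c2s1}, and \Cref{cartesian_c2r1_c2s1} for the three parity cases, just as you do. Your added remarks (why the first lemma yields $\mathcal{C}(S)=1$, and the symmetry in the mixed case) are fine elaborations but not substantively different.
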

    \begin{proof}[Proof of theorem \ref{cartesian_cycles}]
        The first case ($\mathcal{C}(S) = 1$) follows directly from \Cref{cartesian_c2r_c2s}, the second one ($\mathcal{C}(S) = 1/2$)---from \Cref{cartesian_c2r_c2s1}, and the third one ($\mathcal{C}(S) = 1/4$)--- from \Cref{cartesian_c2r1_c2s1}.
    \end{proof}

\section{Tensor products}\label{section_tensor}

    Let $S_1 = (G_1,\allowbreak \sigma_1)$, $S_2 = (G_2,\allowbreak \sigma_2)$ be signed graphs. The tensor product of graphs $S_1$, $S_2$ is a signed graph $S = (G,\allowbreak \sigma)$, such that $V(S) = V(S_1) \times V(S_2)$ and there is an edge $e = (u,\allowbreak u') (v,\allowbreak v')$ in $S$, where $u,\allowbreak v \in V(S_1)$, $u',\allowbreak v' \in V(S_2)$, if and only if $u v \in E(S_1)$ and $u' v' \in E(S_2)$. Then $\sigma(e) = \sigma_1(u v ) \sigma_2(u' v')$.
    
    We denote $S$ by $S_1 \times S_2$. Clearly, $|S| = G_1 \times G_2$.

    First, let us recall a simple relation between the maximum degree of graphs and their tensor product:
    \begin{lemma}\label{lemma_tensor_degrees}
        For arbitrary graphs $G_1$, $G_2$ it holds that $\Delta(G_1 \times G_2) = \Delta(G_1)\allowbreak \Delta(G_2)$.
    \end{lemma}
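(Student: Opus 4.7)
The plan is to compute the degree of an arbitrary vertex $(u, u') \in V(G_1 \times G_2)$ directly from the definition of the tensor product, and then maximize over all such vertices. By the definition, a vertex $(v, v')$ is a neighbor of $(u, u')$ in $G_1 \times G_2$ if and only if $uv \in E(G_1)$ and $u'v' \in E(G_2)$. Since the two conditions are independent, the number of neighbors of $(u,u')$ is exactly the number of neighbors of $u$ in $G_1$ multiplied by the number of neighbors of $u'$ in $G_2$, that is, $\deg_{G_1 \times G_2}((u, u')) = \deg_{G_1}(u) \cdot \deg_{G_2}(u')$.

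Next I would take the maximum over all vertices of $G_1 \times G_2$. Because degrees are nonnegative and the two factors depend on independent coordinates $u$ and $u'$, the maximum of the product factorizes into the product of the maxima:
\begin{displaymath}
\Delta(G_1 \times G_2) \;=\; \max_{(u,u')} \deg_{G_1}(u)\, \deg_{G_2}(u') \;=\; \Big(\max_{u \in V(G_1)} \deg_{G_1}(u)\Big) \Big(\max_{u' \in V(G_2)} \deg_{G_2}(u')\Big) \;=\; \Delta(G_1)\, \Delta(G_2).
\end{displaymath}
Picking any pair $(u^\ast, u'^\ast)$ with $\deg_{G_1}(u^\ast) = \Delta(G_1)$ and $\deg_{G_2}(u'^\ast) = \Delta(G_2)$ witnesses that this upper bound is attained, which completes the proof.

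There is no real obstacle here; the statement is essentially a direct consequence of the definition, and the only thing to be careful about is the factorization of the maximum, which is valid because the two degree functions depend on disjoint coordinates and take nonnegative values.
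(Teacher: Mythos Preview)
Your proof is correct and follows essentially the same approach as the paper: both compute $\deg_{G_1 \times G_2}((u,u')) = \deg_{G_1}(u)\,\deg_{G_2}(u')$ directly from the definition and then argue that maximizing over $(u,u')$ yields $\Delta(G_1)\,\Delta(G_2)$ by choosing vertices of maximum degree in each factor. Your write-up is slightly more explicit about why the maximum of the product factorizes, but the argument is the same.
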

    \begin{proof}
        From the definition of a tensor product it follows that $\deg_{G_1 \times G_2}((u,\allowbreak u')) = \deg_{G_1}(u) \deg_{G_2}(u')$ for any two vertices $u \in V(G_1)$, $u' \in V(G_2)$. Let $u$, $u'$ be arbitrary vertices of maximum degree in $G_1$, $G_2$, respectively. It is clear that $(u,\allowbreak u')$ is a vertex of maximum degree in $G_1 \times G_2$ and its degree is equal to $\deg_{G_1}(u) \deg_{G_2}(u') = \Delta(G_1) \Delta(G_2)$.
    \end{proof}

    Now we proceed with main results of this section, starting with tensor products of arbitrary graphs and paths on $2$ vertices:
    \begin{theorem}\label{tensor_graph_path}
        \label{thm:tensor-path}
        Let $S_1 = (G_1,\allowbreak \sigma_1)$ be an arbitrary signed graph, $S_2 = (P_2,\allowbreak \sigma_2)$ be a signed path, and $S = S_1 \times S_2$. If $\chi'(S_1) = \Delta(S_1)$, then $\chi'(S) = \Delta(S)$.
    \end{theorem}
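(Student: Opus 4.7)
The plan is as follows. By \Cref{lemma_tensor_degrees}, $\Delta(S) = \Delta(S_1)\Delta(S_2) = \Delta(S_1)$, so I aim to build a $\Delta(S_1)$-edge coloring of $S$ directly from an optimal coloring $c_1$ of $S_1$. Writing $V(S_2) = \{u_1, u_2\}$ and $\epsilon = \sigma_2(u_1 u_2) \in \{\pm 1\}$, the first step is the structural observation that $S$ is a ``signed bipartite double cover'' of $S_1$: every edge $vw \in E(S_1)$ yields exactly two edges in $E(S)$, namely $\{(v, u_1), (w, u_2)\}$ and $\{(v, u_2), (w, u_1)\}$, each with sign $\sigma_1(vw)\epsilon$, and the edges incident to $(v, u_i)$ correspond bijectively with the $S_1$-neighbors of $v$, so that $\deg_S((v, u_i)) = \deg_{S_1}(v)$.

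Given this, I would lift $c_1$ by defining
\begin{displaymath}
c((v, u_i)\text{:}\{(v, u_i),(w,u_{3-i})\}) =
\begin{cases}
c_1(v\text{:}vw), & \text{if } i = 1, \\
\epsilon \cdot c_1(v\text{:}vw), & \text{if } i = 2.
\end{cases}
\end{displaymath}
Since $M_{\Delta(S_1)}$ is symmetric about $0$, multiplying by $\epsilon$ keeps the image inside $M_{\Delta(S_1)}$, so $c$ uses at most $\Delta(S_1) = \Delta(S)$ colors. The $\epsilon$-twist on the ``$u_2$-side'' is the one non-trivial design choice: it is exactly what is needed to absorb the extra factor of $\epsilon$ that $\sigma(e) = \sigma_1(vw)\epsilon$ contributes to the signed edge constraint.

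Verification then splits into two short checks. Distinctness at each vertex $(v, u_i)$ is immediate, since the colors of its incidences form the set $\{c_1(v\text{:}vw) : w \sim v\}$ (times the global factor $\epsilon$ when $i = 2$), which is already a proper palette at $v$ in $c_1$. For the edge constraint on $e = \{(v, u_1),(w,u_2)\}$, substitution gives
\begin{displaymath}
-\sigma(e)\, c((w,u_2)\text{:}e) = -\sigma_1(vw)\epsilon \cdot \epsilon\, c_1(w\text{:}vw) = -\sigma_1(vw)\, c_1(w\text{:}vw) = c_1(v\text{:}vw) = c((v, u_1)\text{:}e),
\end{displaymath}
where the penultimate step is the edge constraint on $vw$ for $c_1$; the other edge $\{(v, u_2),(w,u_1)\}$ is handled symmetrically.

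I do not expect a serious obstacle: the entire argument is a one-line construction followed by a line of sign-chasing. The only subtle point is the $\epsilon$-twist between the two sides of the cover, which is necessary (and sufficient) to handle the case $\epsilon = -1$; for $\epsilon = +1$ the coloring is simply the ``same'' $c_1$ used on both copies of $V(S_1)$.
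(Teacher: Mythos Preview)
Your proof is correct, and it takes a genuinely different (and shorter) route from the paper's.

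The paper argues structurally: it fixes an optimal coloring $c_1$ of $S_1$, takes for each $\alpha \ge 0$ the subgraph $S_1^\alpha$ spanned by the edges colored $\pm\alpha$, and then analyzes $S^\alpha = S_1^\alpha \times S_2$ component by component. Each component of $S_1^\alpha$ is a path or a balanced cycle, and the paper checks case by case (path; balanced even cycle, split further by the sign of the $P_2$-edge; balanced odd cycle) that $S^\alpha$ again consists only of paths and balanced cycles, hence is $2$-edge-colorable with $\pm\alpha$. Your argument instead recognizes $S$ as the signed bipartite double cover of $S_1$ and writes down the lifted coloring in one line, absorbing the extra factor $\epsilon = \sigma_2(u_1u_2)$ by twisting all colors on the $u_2$-side. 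This bypasses the entire case analysis: distinctness at each vertex and the signed edge constraint follow by a single substitution, exactly as you wrote. What the paper's approach buys is a description of the $\pm\alpha$ color classes in $S$ (useful elsewhere, e.g.\ in their tree extension); what your approach buys is brevity and a proof that works uniformly without ever mentioning cycles or their parity.
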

    
    \begin{proof}[Proof of theorem \ref{tensor_graph_path}]
        Since $\Delta(S_2) = 1$, it follows from \Cref{lemma_tensor_degrees} that $\Delta(S) = \Delta(S_1)$. We also observe that $m(S) = 2m(S_1)$. Let $c_1$ be an arbitrary optimal coloring of $S_1$, using exactly $\Delta(S_1)$ colors (denoted by $R_1$). For every pair of colors $\pm \alpha$ with $\alpha \ge 0$ (note that $\alpha$ might be possibly equal to $0$) we consider graph $S_1^{\alpha}$ such that $S_1^{\alpha} \subseteq S_1$ and it contains only the edges colored with $\pm \alpha$ in $c_1$. By $S^{\alpha}$ we denote a subgraph of $S$ such that $S^{\alpha} = S_1^{\alpha} \times S_2$. It is easy to observe that $\bigcup_{\pm \alpha \in R_1} S^{\alpha} = S$ and graphs $S^{|\alpha|}$, $S^{|\beta|}$ are edge-disjoint for any $\alpha$, $\beta \in R_1$ such that $|\alpha| \neq |\beta|$.

        We observe that $S_1^{\alpha}$ is a graph of a maximum degree at most $2$ with all the connected components balanced. We show the coloring $c$ of $S$ by showing how to color incidences of $S^{\alpha}$ using only colors used by incidences of $S_1^{\alpha}$. Let $H$ be any connected component of graph $S_1^{\alpha}$. We consider two cases:

        \begin{enumerate}
            \item $H$ is a path. Let $P = H \times S_2$, $P \subseteq S^{\alpha}$. It is easy to observe that $P$ consists of two vertex-disjoint paths isomorphic to $H$, so they can be colored using colors $\pm \alpha$ in the coloring $c$.
            
            \item $H$ is a cycle $C_r$. It follows that $H$ is balanced, so has an even number of negative edges. We consider two cases separately:
            \begin{enumerate}
                \item $r = 2k$. We observe that since $H$ has an even number of edges and an even number of negative edges, it also has an even number of positive edges. Let $C = H \times S_2$. $C$ consists of two vertex-disjoint cycles $(C',\allowbreak \sigma')$, $(C'',\allowbreak \sigma'')$ such that $n(C') = n(C'') = r$. 
                
                We consider two cases:
                \begin{itemize}
                    \item The only edge of $S_2$ is positive. Every edge in $C'$ has the same sign as the corresponding edge in $H$, so there is the same number of negative edges in $C'$ as in $H$, thus $C'$ is balanced.
                    \item The only edge of $S_2$ is negative. Every edge in $C'$ has the opposite sign as the corresponding edge in $H$, so there is the same number of negative edges in $C'$ as the number of positive edges in $H$. $H$ has an even number of positive edges, so $C'$ is balanced.
                \end{itemize}
                The considerations above are exactly the same for $C''$, so we infer that both cycles are balanced and can be colored in $c$ using colors $\pm \alpha$ for some $\alpha > 0$.

                \item $r = 2k + 1$. Let $C = H \times S_2$. It is easy to observe that $C$ is a cycle with $2r$ nodes and there are $r$ pairs of edges with the same signs in $C$. It follows that $C$ is a balanced cycle, so can be colored in $c$ using colors $\pm \alpha$ for some $\alpha > 0$.
            \end{enumerate}
        \end{enumerate}
        We note that if $0 \in R_1$, then $S^0$ is a matching, that is, a collection of paths $P_2$. In this case all $H \times S_2$ consist of two vertex-disjoint paths $P_2$, which of course can be colored with color $0$.
        
        It follows that graph $S^{\alpha}$ can be colored using colors $\pm\alpha$ and since $\bigcup \limits_{\pm \alpha \in R_1} S^{\alpha} = S$ it follows that $S$ can be colored using colors $R_1$, so $\chi'(S) = \Delta(S)$ and $\chi'(S) = \Delta(S)$.
    \end{proof}

    Now let us proceed to the tensor products of arbitrary graphs with trees:
    \begin{lemma}\label{delta_tree}
        For an arbitrary graph $G$ and a tree $T$ it holds that $\Delta(G \times T) = \Delta(T) \Delta(G \times P_2)$.
    \end{lemma}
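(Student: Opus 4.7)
The identity looks like a substantive statement about the structure of $G\times T$, but once we unpack both sides with \Cref{lemma_tensor_degrees} it collapses into an arithmetic identity. My plan is therefore to apply that lemma to each of the two tensor products appearing in the claim and then simply multiply.

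First, I would compute the right-hand factor $\Delta(G\times P_2)$. Since $\Delta(P_2)=1$, \Cref{lemma_tensor_degrees} gives $\Delta(G\times P_2)=\Delta(G)\Delta(P_2)=\Delta(G)$. Multiplying by $\Delta(T)$ yields $\Delta(T)\Delta(G\times P_2)=\Delta(T)\Delta(G)$.

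Second, I would compute the left-hand side directly using the same lemma, obtaining $\Delta(G\times T)=\Delta(G)\Delta(T)$, which matches the expression derived above. This yields the claimed equality.

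The only reason the tree hypothesis is worth mentioning at all is to connect the statement with the intended use of the lemma later in the section (decomposing $T$ into copies of $P_2$ for coloring purposes); it plays no role in the degree identity itself. There is no real obstacle here, as the statement is a one-line consequence of \Cref{lemma_tensor_degrees} and the fact that $\Delta(P_2)=1$.
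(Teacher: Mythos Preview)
Your proposal is correct and matches the paper's own proof essentially line for line: the paper also applies \Cref{lemma_tensor_degrees} twice to get $\Delta(G\times P_2)=\Delta(G)$ and $\Delta(G\times T)=\Delta(T)\Delta(G)$, then combines the two. Your remark that the tree hypothesis is not actually needed for the degree identity is also accurate.
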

    \begin{proof}
        It follows from \Cref{lemma_tensor_degrees} that $\Delta(G \times P_2) = \Delta(G)$ and $\Delta(G \times T) = \Delta(T) \Delta(G)$, so $\Delta(G \times T) = \Delta(T) \Delta(G \times P_2)$.
    \end{proof}

    \begin{lemma}\label{lemma_tensor_decomposition_path_times_graph}
        Let $G_1$, $G_2$ be arbitrary graphs, $P_2$ be a path and $G = G_1 \times G_2$, $G_P = G_1 \times P_2$. Then, $G$ has a decomposition into exactly $m(G_2)$ copies of $G_P$. Moreover, every copy may be assigned a corresponding edge in $G_2$ in such a way that two copies have any vertices in common if and only if their respective edges are adjacent in $G_2$.
    \end{lemma}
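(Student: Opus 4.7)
The plan is to construct the decomposition explicitly by assigning to each edge $e = u'v' \in E(G_2)$ a subgraph $H_e \subseteq G$, and then to verify the three required properties: (i) each $H_e$ is isomorphic to $G_P$, (ii) the $H_e$ partition $E(G)$, and (iii) the vertex-sharing pattern of the $H_e$'s mirrors the edge-adjacency pattern of $G_2$.

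Concretely, for every $e = u'v' \in E(G_2)$ I would define $H_e$ as the subgraph of $G$ with vertex set $V(G_1) \times \{u', v'\}$ and edge set
\[
\{\,(u, u')(v, v') : uv \in E(G_1)\,\} \cup \{\,(u, v')(v, u') : uv \in E(G_1)\,\}.
\]
By the definition of the tensor product, viewing the two-vertex path $P_2$ as having vertices $u'$ and $v'$, the subgraph $H_e$ coincides with $G_1 \times P_2 = G_P$ up to relabeling, which handles (i).

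For (ii) I would note that an arbitrary edge of $G$ has the form $(u, u')(v, v')$ with $uv \in E(G_1)$ and $u'v' \in E(G_2)$; hence it belongs to $H_{u'v'}$ and to no other $H_e$, since the $G_2$-coordinate pair $\{u',v'\}$ determines $e$ uniquely. Thus $\{E(H_e)\}_{e \in E(G_2)}$ partitions $E(G)$ and there are exactly $m(G_2)$ copies in the decomposition.

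For (iii) the observation is that $V(H_e) = V(G_1) \times \{u',v'\}$, so two copies $H_{e_1}$ and $H_{e_2}$ with $e_i = u'_i v'_i$ share a vertex if and only if $\{u'_1, v'_1\} \cap \{u'_2, v'_2\} \neq \emptyset$, which is precisely the condition that $e_1$ and $e_2$ are adjacent in $G_2$. No step here is genuinely difficult; the only place that needs care is the bookkeeping in (ii) to confirm that the partition is really a partition rather than just a cover, and this is immediate from the fact that an edge of $G$ uniquely determines the unordered pair of its $G_2$-coordinates.
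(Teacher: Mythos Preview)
Your proof is correct and follows essentially the same approach as the paper: both assign to each edge of $G_2$ the copy of $G_P$ sitting on the corresponding pair of $G_2$-layers. The paper phrases this more informally via an inductive edge-addition viewpoint, whereas your explicit construction and verification of the three properties is cleaner and more rigorous, but the underlying idea is identical.
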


    \begin{proof}
        The lemma follows directly from the definition of a tensor product. We first consider the case $G_2 = P_2$. In this case it is clear that $G$ has a decomposition into $m(G_2) = 1$ copies of $G_P$, since $G = G_P$. That single copy has a corresponding edge in $G_2$---the only edge of $G_2$.
        
        We observe that when a new edge is added to $G_2$, there appears a new copy of $G_P$ in $G$, denoted by $H'$, so intuitively we assign the new edge in $G_2$ as a corresponding edge for $H'$. It is easy to observe that $H'$ has some vertices in common with other copy $H''$ if and only if the corresponding edges of $H'$, $H''$ are adjacent in $G_2$. It follows from the definition of a tensor product that if copies have vertices in common, there are exactly $\frac{n(G_P)}{2}$ of them.
    \end{proof}

    \begin{theorem}\label{theorem_tensor_graph_tree}
        Let $T$ be a tree, $S_1 = (G_1,\allowbreak \sigma_1)$, $S_2 = (T,\allowbreak \sigma_2)$ and $S = (G_1 \times T, \sigma) = S_1 \times S_2$. If $\chi'(S_1) = \Delta(S_1)$, then $\chi'(S) = \Delta(S)$.
    \end{theorem}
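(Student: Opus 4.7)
The plan is to reduce the coloring of $S = S_1 \times T$ to several instances of \Cref{thm:tensor-path}, glued together via an edge-decomposition of $T$. By \Cref{delta_tree} we have $\Delta(S) = d_1 d_2$ with $d_1 := \Delta(S_1)$ and $d_2 := \Delta(T)$, so the goal is to construct a signed $d_1 d_2$-edge-coloring of $S$. Since trees are class-$1$ (being bipartite), I would start by partitioning $E(T)$ into $d_2$ matchings $M_1,\ldots,M_{d_2}$.

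Next I would invoke \Cref{lemma_tensor_decomposition_path_times_graph} to decompose $S$ into $m(T)$ signed subgraphs $H_e$, one for each $e \in E(T)$, with $H_e$ isomorphic to $S_1 \times (P_2, \sigma_2(e))$, and with $H_e, H_{e'}$ sharing vertices only when $e, e'$ are adjacent in $T$. For each matching $M_i$, the union $U_i := \bigcup_{e \in M_i} H_e$ is then a vertex-disjoint union of copies of $S_1 \times P_2$-type signed graphs, each $d_1$-edge-colorable by \Cref{thm:tensor-path}; pasted together, these yield a single $d_1$-edge-coloring $c_i$ of $U_i$. Following the construction from the proof of \Cref{thm:tensor-path}, I would fix once and for all a $d_1$-coloring of $S_1$ with zero class $S_1^0 \subseteq E(S_1)$ (a matching, empty if $d_1$ is even), and arrange the $c_i$'s so that the incidences on which $c_i$ takes value $0$ are precisely those in $S_1^0 \times M_i$.

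The final step is to assemble $c_1, \ldots, c_{d_2}$ into one $d_1 d_2$-edge-coloring of $S$ using pairwise disjoint palettes. When $d_1$ is even no $c_i$ uses the color $0$, and I would reuse the sign-preserving shift from the proof of \Cref{cartesian_delta_delta}: add $(i-1)d_1/2$ to the positive values of $c_i$ and subtract the same amount from its negative values, yielding disjoint palettes $P^{(i)} = \{\pm((i-1)d_1/2 + 1), \ldots, \pm i d_1/2\}$ whose union is exactly $M_{d_1 d_2}$. When $d_1$ is odd, let $l := (d_1-1)/2$ and apply the same sign-preserving shift only to the non-zero part of each $c_i$, producing disjoint palettes $P^{(i)}_{\mathrm{nz}} = \{\pm((i-1)l + 1), \ldots, \pm i l\}$ with union $\{\pm 1, \ldots, \pm d_2 l\}$. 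The $0$-incidences of all the $c_i$'s together form the single subgraph $H^0 = S_1^0 \times T$; since $S_1^0$ is a matching, $H^0$ is a vertex-disjoint union of copies of $P_2 \times T$, and a second application of \Cref{thm:tensor-path}, this time with the $\Delta$-colorable signed tree $T$ in the role of the first factor and a signed $P_2$ in the role of the second, supplies a $d_2$-edge-coloring of $H^0$, which I would relabel through the evident sign-preserving bijection onto the leftover palette $P^{(0)} = M_{d_1 d_2} \setminus \bigcup_i P^{(i)}_{\mathrm{nz}}$ of size exactly $d_2$.

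The hardest part is undoubtedly the odd-$d_1$ case, since a naive shift fails there: every $c_i$ is forced to use color $0$ somewhere and the shifted palettes would all collide at $0$. The rescuing observation is that the $0$-incidences of all the $c_i$'s combine into the single clean subgraph $S_1^0 \times T$, on which \Cref{thm:tensor-path} applies a second time (with the factors swapped) to absorb exactly the $d_2$ colors of $M_{d_1 d_2}$ left unused after the shift. Granting that this assembly works as intended, the resulting coloring is proper at every vertex (because the contributing palettes are pairwise disjoint there), preserves the condition $f(u\text{:}uv) = -\sigma(uv) f(v\text{:}uv)$ on every edge (because each $c_i$ does, and both the sign-preserving shift and the relabeling respect the condition), and uses exactly $d_1 d_2 = \Delta(S)$ colors, so $\chi'(S) = \Delta(S)$.
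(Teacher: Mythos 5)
Your proposal is correct and follows essentially the same route as the paper's proof: decompose $S$ into copies of $S_1 \times P_2$ indexed by $E(T)$ via \Cref{lemma_tensor_decomposition_path_times_graph}, assign pairwise disjoint shifted palettes according to a $\Delta(T)$-edge-coloring (equivalently, matching decomposition) of $T$, and absorb the colliding color-$0$ incidences using $\Delta(T)$ fresh colors. The only cosmetic differences are that you skip the paper's preliminary switching step, and you recolor the $0$-subgraph by identifying it explicitly as $S_1^0 \times T$ (a vertex-disjoint union of copies of $P_2 \times T$) and applying \Cref{thm:tensor-path} a second time with the factors swapped, whereas the paper shows directly that this subgraph is acyclic and invokes the $\Delta$-edge-colorability of signed forests --- both arguments ultimately color the same forest.
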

    \begin{proof}[Proof of theorem \ref{theorem_tensor_graph_tree}]
        Let $H = G_1 \times P_2$. It follows from \Cref{lemma_tensor_decomposition_path_times_graph} that $|S|$ has a decomposition into exactly $m(T)$ copies of graph $H$. Moreover, copies of $H$ are connected in the same way as edges of $T$. It means that each edge of $T$ has a respective copy of $H$ in $S$ and two copies of $H$ share a set of vertices if and only if their respective edges are adjacent in $T$.
        
        We observe that graph $S$ can be switched to a switching equivalent in such a way that all the copies of $H$ in the latter signed graph have exactly the same signs. That way we can consider them as copies of some $S' = (H, \sigma')$ -- and without loss of generality we treat these copies of $S'$ as a decomposition of $S$.

        Let $c'$ be an arbitrary optimal edge-coloring of $S'$. Since $S' = (G_1 \times P_2, \sigma')$, from \Cref{thm:tensor-path} we know that $c'$ uses exactly $\Delta(S')$ colors. Let $k = \lfloor \frac{\Delta(S')}{2} \rfloor$. Note that $c'$ uses colors $\{\pm 1,\allowbreak \ldots,\allowbreak \pm k\}$ and additionally color $0$ in case when $\Delta(S')$ is odd.
        
        We define sets $R_1,\allowbreak \ldots,\allowbreak R_{\Delta(T)}$ such that $R_i = \{\pm ((i - 1)k + 1),\allowbreak \ldots,\allowbreak \pm ik\}$ if $c'$ does not use color $0$ (i.e. when $\Delta(S')$ is even), otherwise $R_i = \{0\} \cup \{\pm ((i - 1)k + 1),\allowbreak \ldots,\allowbreak \pm ik\}$. We observe that these sets contain colors that are shifted in such a way that $R_1 \cup \ldots \cup R_\Delta(T)$ contains all the colors $\pm 1,\allowbreak \ldots,\allowbreak \pm k \Delta(T)$ and moreover $R_i \cap R_j \subseteq \{0\}$ for any $i \neq j$.

        We observe that any of the copies of graph $S'$ in $S$ could be colored using any of the $R_1,\allowbreak \ldots,\allowbreak R_{\Delta(T)}$ sets of colors and the coloring of that single copy would be correct.

        Additionally, let $c''$ be an arbitrary optimal edge-coloring of a tree $T$ (without considering its signs). It is a well-known fact that such a coloring uses $\Delta(T)$ colors, namely $\{1,\allowbreak \ldots,\allowbreak \Delta(T)\}$.
        
        Now, we will construct a coloring $c$ of $S$. We first color separately every copy $S'_i$ of $S'$ that we obtained from decomposition of $S$. We remind that by \Cref{lemma_tensor_decomposition_path_times_graph} each such copy has an edge of $T$ assigned to it, which we will denote by $e_i$. This way, we color $S'_i$ using only colors from the $R_{c''(e_i)}$ set and respecting the only requirement that we color all the copies such that corresponding edges of different copies get either the color $0$ or the colors with a difference equal to some multiple of $k$---which by construction of the colors sets is always possible.

        We observe that since $c''$ is a correct coloring of $T$, $c$ is \emph{almost} a correct coloring of $S$---because there may arise conflicts in coloring $c$, but occurring only on edges colored with $0$, since it is the only color that might be shared between different sets $R_i$, $R_j$. It follows that in the case when $\Delta(S')$ is even, color $0$ is not used in $c$ at all and $c$ is a proper coloring. Then $c$ uses exactly $\Delta(T) \Delta(S') = \Delta(S)$ colors.

        Consider now the case when $\Delta(S')$ is odd. Clearly, color $0$ is used in the colorings of all the copies of $S'$ in $S$. By $X \subseteq S$ we denote a subgraph containing all the edges colored by $c$ with color $0$. We will show the procedure of recoloring incidences of $X$ to get a correct coloring of $S$.
        We observe that $X$ is an acyclic graph. If there was a cycle $C_r$ in $X$ then either of two cases would need to hold:
        \begin{enumerate}
            \item There is a copy of $S'$ in $S$ with two adjacent edges both belonging to $X$. It follows that they both were colored with $0$ by $c$. Clearly, such a coloring of that copy is incorrect since the set of edges colored with color $0$ must span a matching, so we reach a contradiction.
            \item There is no copy of $S'$ in $S$ with two adjacent edges both belonging to $X$. It means that any two adjacent edges of $C_r$ in $X$ belong to different copies that share some vertices. We can list these copies: $S'_1$, \ldots, $S'_{r}$, clearly $V(S'_i) \cap V(S'_{i+1}) \neq \emptyset$ for $1 \leq i < r$. Since the cycle must be closed, $V(S'_1) \cap V(S'_r) \neq \emptyset$. We remind that each copy $S'_i$ has a corresponding edge $e_i$ in $T$, so it follows that there is a cycle in $T$ with edges $e_1$, \ldots, $e_r$. Clearly, a contradiction.
        \end{enumerate}

        From the first case above it is also easy to observe that $\Delta(X) = \Delta(T)$. We note that $c$ uses exactly $\Delta(T) (\Delta(S') - 1)$ colors other than $0$. We note that $\Delta(T) (\Delta(S') - 1)$ is even since $\Delta(S')$ is odd. Since signed forests are always $\Delta$-edge-colorable \cite{classes_one_two}, we can easily color $X$ using a symmetric set of $\Delta(T)$ colors that are different than all the colors used by $c$ on the edges of $S$ that do not belong to $X$. That way the new coloring uses exactly $\Delta(T) (\Delta(S') - 1) + \Delta(T) = \Delta(T) \Delta(S') = \Delta(S)$ colors.
    \end{proof}

    Since every path is a tree and every signed path can be edge-colored using $\Delta$ colors, it follows that:
    \begin{cor}\label{tensor_path_path}
        Let $S_1 = (P_r,\allowbreak \sigma_1)$, $S_2 = (P_s,\allowbreak \sigma_2)$ and $S = S_1 \times S_2$. If $r,\allowbreak s > 1$, then $\chi'(S) = \Delta(S)$. \qed
    \end{cor}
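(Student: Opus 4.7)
The plan is to invoke \Cref{theorem_tensor_graph_tree} directly, after checking that both of its hypotheses are satisfied by the given signed paths. First, I would observe that $S_1 = (P_r, \sigma_1)$ is a signed path with $r > 1$, so it has at least one edge, and by \Cref{thm:behr-path} we have $\chi'(S_1) = \Delta(S_1)$. Hence $S_1$ qualifies as the first factor in \Cref{theorem_tensor_graph_tree}.

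Next, I would note that $S_2 = (P_s, \sigma_2)$ has as its underlying graph the path $P_s$, which is a tree (with $s > 1$ ensuring it is a nontrivial tree with at least one edge). Thus $S_2$ qualifies as the signed tree required by \Cref{theorem_tensor_graph_tree}. Applying that theorem to $S = S_1 \times S_2$ yields immediately $\chi'(S) = \Delta(S)$, completing the proof.

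The only subtlety worth remarking on is the role of the hypothesis $r, s > 1$: if either $r = 1$ or $s = 1$, then the corresponding signed path has no edges, so $S_1 \times S_2$ is an empty graph on $rs$ isolated vertices, and the statement $\chi'(S) = \Delta(S)$ is vacuous (or degenerate). With both $r, s > 1$, both factors carry edges, and the two hypotheses of \Cref{theorem_tensor_graph_tree} are met without further work. There is no real obstacle here; the corollary is a direct specialization, and all the technical work has already been carried out in the proof of \Cref{theorem_tensor_graph_tree} (and, one level deeper, in \Cref{tensor_graph_path}).
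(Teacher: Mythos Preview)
Your proposal is correct and follows exactly the same route as the paper: the corollary is stated immediately after \Cref{theorem_tensor_graph_tree} with the one-line justification that every path is a tree and every signed path is $\Delta$-edge-colorable, so the theorem applies directly.
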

    This, together with \Cref{thm:behr-path} and the fact that $P_r \times P_1$ are exactly graphs with no edges proves that the tensor products of paths always belong to class $1^\pm$.

\section{Strong products}\label{section_strong}

    Let $S_1 = (G_1,\allowbreak \sigma_1)$, $S_2 = (G_2,\allowbreak \sigma_2)$ be signed graphs. The strong product of graphs $S_1$, $S_2$ is a signed graph $S = (G,\allowbreak \sigma)$, such that $V(S) = V(S_1) \times V(S_2)$ and there is an edge $e = (u,\allowbreak u') (v,\allowbreak v')$ in $S$, where $u,\allowbreak v \in V(S_1)$, $u',\allowbreak v' \in V(S_2)$, if and only if one of the following conditions holds:
    \begin{enumerate}
        \item $u = v$, $u' v' \in E(S_2)$. In such case $\sigma(e) = \sigma_2(u' v')$;
        \item $u v \in E(S_1)$, $u' = v'$. In such case $\sigma(e) = \sigma_1(u v)$;
        \item $u v \in E(S_1)$ and $u' v' \in E(S_2)$. In such case $\sigma(e) = \sigma_1(u v) \sigma_2(u' v')$.
    \end{enumerate}
    
    We denote $S$ by $S_1 \boxtimes S_2$. Clearly, $|S| = G_1 \boxtimes G_2$. It is clear that $S_1 \boxtimes S_2$ has a decomposition into $S_1 \square S_2$ and $S_1 \times S_2$. We observe that $\deg_S(u,\allowbreak u') = \deg_{S_1 \square S_2}(u,\allowbreak u') + \deg_{S_1 \times S_2}(u,\allowbreak u') = \deg_{S_1}(u) + \deg_{S_2}(u') + \deg_{S_1}(u) \deg_{S_2}(u')$, thus it follows that $\Delta(S) = \Delta(S_1 \square S_2) + \Delta(S_1 \times S_2) = \Delta(S_1) + \Delta(S_2) + \Delta(S_1) \Delta(S_2)$.

    \begin{lemma}\label{strong_lemma}
        Let $S_1$, $S_2$ be signed graphs and $H_1 = S_1 \square S_2$, $H_2 = S_1 \times S_2$, $S = S_1 \boxtimes S_2$. If at least one of $\{\Delta(H_1)$, $\Delta(H_2)\}$ is even, $\chi'(H_1) = \Delta(H_1)$ and $\chi'(H_2) = \Delta(H_2)$, then $\chi'(S) = \Delta(S)$.
    \end{lemma}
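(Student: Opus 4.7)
The plan is to mimic Case~$1$ of the proof of \Cref{cartesian_delta_delta} essentially verbatim. Since $S$ decomposes into the edge-disjoint subgraphs $H_1$ and $H_2$ with $V(H_1) = V(H_2) = V(S)$, and since optimal colorings $c_1$ of $H_1$ and $c_2$ of $H_2$ are available by hypothesis, I would combine them after an appropriate shift so that the color sets become disjoint. Without loss of generality assume that $\Delta(H_2)$ is even, which ensures that $c_2$ does not use color $0$.

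More concretely, I would define a function $c \colon I(S) \to \mathbb{Z}$ by
\begin{displaymath}
    c(v\text{:}e) =
    \begin{cases}
        c_1(v\text{:}e), & \text{if } e \in E(H_1)\text{,}\\
        c_2(v\text{:}e) + \lfloor \Delta(H_1)/2 \rfloor, & \text{if } e \in E(H_2) \text{ and } c_2(v\text{:}e) > 0\text{,}\\
        c_2(v\text{:}e) - \lfloor \Delta(H_1)/2 \rfloor, & \text{if } e \in E(H_2) \text{ and } c_2(v\text{:}e) < 0\text{.}
    \end{cases}
\end{displaymath}
These three cases exhaust $I(S)$ because $c_2$ avoids $0$ by the parity assumption, and $H_1$, $H_2$ are edge-disjoint.

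Next I would verify that $c$ is a valid signed edge-coloring. The signed-edge-coloring condition $c(u\text{:}uv) = -\sigma(uv)\, c(v\text{:}uv)$ on an edge $e \in E(H_1)$ is inherited directly from $c_1$; on an edge $e \in E(H_2)$, both endpoint incidences receive the same additive shift (in absolute value, and with opposite signs), so the identity is preserved. To check that incidences at a common vertex receive distinct colors, I would observe that the set $C_1$ of colors used on $E(H_1)$-incidences lies in $\{0, \pm 1, \ldots, \pm \lfloor \Delta(H_1)/2 \rfloor\}$, while the set $C_2$ of colors used on $E(H_2)$-incidences lies in $\{\pm(\lfloor \Delta(H_1)/2 \rfloor + 1), \ldots, \pm(\lfloor \Delta(H_1)/2 \rfloor + \Delta(H_2)/2)\}$. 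Since $\max\{|x|: x \in C_1\} < \min\{|x|: x \in C_2\}$, the sets $C_1$ and $C_2$ are disjoint, and within each set properness is inherited from $c_1$ or from the shifted $c_2$ respectively.

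Finally, a quick count gives $|C_1 \cup C_2| = \Delta(H_1) + \Delta(H_2) = \Delta(S)$, exactly as in the Cartesian case: the argument splits cleanly according to whether $\Delta(H_1)$ is even (yielding $C_1 = \{\pm 1, \ldots, \pm \Delta(H_1)/2\}$) or odd (yielding $C_1 = \{0, \pm 1, \ldots, \pm (\Delta(H_1)-1)/2\}$). I do not expect any real obstacle: the whole point of the hypothesis that one of $\Delta(H_1)$, $\Delta(H_2)$ is even is precisely to avoid the delicate situation encountered in Case~$2$ of \Cref{cartesian_delta_delta}, where two incidences colored $0$ might meet and force a separate recoloring argument. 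Here the parity assumption eliminates that subtlety, so the shift construction goes through directly.
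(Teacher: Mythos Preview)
Your proposal is correct and matches the paper's proof essentially verbatim: the paper also assumes without loss of generality that $\Delta(H_2)$ is even, defines exactly the same shifted coloring $c$, observes that the resulting color sets $C_1$ and $C_2$ are disjoint, and then counts colors by splitting on the parity of $\Delta(H_1)$. There is nothing to add.
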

    \begin{proof}
        Without loss of generality, we assume that $\Delta(H_2)$ is even.
        
        Let $c_1$, $c_2$ be arbitrary optimal colorings of graphs $H_1$, $H_2$, respectively. $c_1$ uses colors $\pm 1, \ldots, \pm \lfloor \frac{\Delta(H_1)}{2} \rfloor$ and possibly color $0$ if $\Delta(H_1)$ is odd, and $c_2$ uses colors $\pm 1, \ldots, \pm \frac{\Delta(H_2)}{2}$. It is clear that $H_1$, $H_2 \subseteq S$ and $H_1 \cup H_2 = S$. Let $c$ be an edge coloring of $S$ such that:
        \[
            c(u \mbox{:} uv) =
            \begin{cases}
            c_1(u \mbox{:} uv), & \mbox{if } uv \in E(H_1) \mbox{;}
            \\
            c_2(u \mbox{:} uv) + \lfloor \frac{\Delta(H_1)}{2} \rfloor, & \mbox{if } uv \in E(H_2) \mbox{ and } c_2(u \mbox{:} uv) > 0 \mbox{;}
            \\
            c_2(u \mbox{:} uv) - \lfloor \frac{\Delta(H_1)}{2} \rfloor, & \mbox{if } uv \in E(H_2) \mbox{ and } c_2(u \mbox{:} uv) < 0 \mbox{.}
            \end{cases}
        \]
        We observe that incidences of $H_1$ are colored by $c$ using the same colors as by $c_1$, so their coloring is proper. The set of colors used to color incidences of graph $H_2$ has been shifted from $\pm 1, \ldots, \pm \frac{\Delta(H_2)}{2}$ to $\pm (\lfloor \frac{\Delta(H_1)}{2} \rfloor + 1), \ldots, \pm (\lfloor \frac{\Delta(H_1)}{2} \rfloor + \frac{\Delta(H_2)}{2})$. Clearly, incidences of $H_2$ are properly colored in $c$. It is easy to observe that the sets of colors used to color incidences of $H_1$ and $H_2$ in $c$ are disjoint. We consider two cases:
        \begin{enumerate}
            \item $\Delta(H_1)$ is even. Graph $S$ is colored using a set of colors $R = \{\pm 1, \ldots,$ $\pm (\frac{\Delta(H_1)}{2} + \frac{\Delta(H_2)}{2})\}$. Therefore $|R| = 2 (\frac{\Delta(H_1)}{2} + \frac{\Delta(H_2)}{2}) = \Delta(H_1) + \Delta(H_2) = \Delta(S)$, so $\chi'(S) = \Delta(S)$.
            \item $\Delta(H_1)$ is odd. Graph $S$ is colored using a set of colors $R = \{0,\allowbreak \pm 1, \ldots,$ $\pm (\lfloor \frac{\Delta(H_1)}{2} \rfloor + \frac{\Delta(H_2)}{2})\}$. Therefore $|R| = 1 + 2 (\frac{\Delta(H_1) - 1}{2} + \frac{\Delta(H_2)}{2}) = \Delta(H_1) + \Delta(H_2) = \Delta(S)$, so $\chi'(S) = \Delta(S)$.
        \end{enumerate}
    \end{proof}
    
    \begin{theorem}\label{strong_paths}
        Let $S_1 = (P_r,\allowbreak \sigma_1)$, $S_2 = (P_s,\allowbreak \sigma_2)$ and $S = (P_r \boxtimes P_s, \sigma) = S_1 \boxtimes S_2$. If $r > 1$ or $s > 1$, then $\chi'(S) = \Delta(S)$.
    \end{theorem}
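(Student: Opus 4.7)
The plan is to apply Lemma \ref{strong_lemma} to the canonical decomposition $S = H_1 \cup H_2$ where $H_1 = S_1 \square S_2$ and $H_2 = S_1 \times S_2$, after disposing of the degenerate cases in which one of $r, s$ equals $1$.

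First I would handle the degenerate case: if $r = 1$, then $S_1 = P_1$ has no edges, so only condition (1) of the strong product definition can contribute edges to $S$. It follows directly from the definition that $S$ is isomorphic to $S_2$, and then \Cref{thm:behr-path} gives $\chi'(S) = \Delta(S_2) = \Delta(S)$. The case $s = 1$ is symmetric. For the rest of the proof I therefore assume $r, s \geq 2$.

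For the main case, by the remarks preceding \Cref{strong_lemma} we have $\Delta(H_1) = \Delta(S_1) + \Delta(S_2)$ and $\Delta(H_2) = \Delta(S_1) \Delta(S_2)$, where $\Delta(S_i) \in \{1, 2\}$ depending on whether the corresponding path has $2$ or at least $3$ vertices. By \Cref{cartesian_path_path} we have $\chi'(H_1) = \Delta(H_1)$, and since $r, s \geq 2$, \Cref{tensor_path_path} gives $\chi'(H_2) = \Delta(H_2)$. It remains to verify that at least one of $\Delta(H_1), \Delta(H_2)$ is even. If $r = s = 2$, then $\Delta(H_1) = 1 + 1 = 2$; if exactly one of $r, s$ equals $2$, then $\Delta(H_2) = 1 \cdot 2 = 2$; and if $r, s \geq 3$, then $\Delta(H_1) = 2 + 2 = 4$. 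In every subcase the hypothesis of \Cref{strong_lemma} is satisfied, and the lemma yields $\chi'(S) = \Delta(S)$.

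There is no real obstacle here: the result is essentially a bookkeeping application of \Cref{strong_lemma}, whose hypotheses are met because the only candidate for an odd maximum degree among $\Delta(H_1), \Delta(H_2)$ is $\Delta(H_1) = 1 + 2 = 3$, and in that subcase $\Delta(H_2) = 2$ is even. The only point that requires care is remembering that \Cref{tensor_path_path} needs both path lengths to exceed $1$, which is why the $r = 1$ and $s = 1$ cases have to be peeled off separately at the start.
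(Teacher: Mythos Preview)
Your proof is correct and follows the same overall strategy as the paper: decompose $S$ into $H_1 = S_1 \square S_2$ and $H_2 = S_1 \times S_2$, invoke \Cref{cartesian_path_path} and \Cref{tensor_path_path}, and then apply \Cref{strong_lemma}. The only difference is that the paper treats the subcases $r = s = 2$ and $r = 2,\ s > 2$ by exhibiting explicit colorings (coloring $H_1$ with $\{\pm 1\}$ or $\{0,\pm 1\}$ and $H_2$ with $\{0\}$ or $\{\pm 2\}$ directly), whereas you observe that \Cref{strong_lemma} already covers these subcases because the parity hypothesis is met; your version is slightly more uniform and avoids the ad hoc constructions.
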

    
    \begin{proof}[Proof of theorem \ref{strong_paths}]
        Without loss of generality, we assume that $r \leq s$. Let $H_1 = S_1 \square S_2$, $H_2 = S_1 \times S_2$. We observe that $H_1, H_2$ is a decomposition of $S$ and consider 3 cases:
        \begin{enumerate}
            \item $r = 1$ and $s \geq 2$. Graph $S$ is a path, so can be colored using $\Delta(S)$ colors (see \Cref{thm:behr-path}).
            \item $r = 2$ and $s = 2$. Graph $S$ is a complete graph $K_4$. It follows from \Cref{cartesian_path_path} that incidences of graph $H_1$ can be colored using colors $\pm 1$. Graph $H_2$ is a matching so its incidences can be colored using color $0$. It follows that graph $S$ can be colored using colors $\{0,\allowbreak \pm 1\}$, so $\chi'(S) = 3 = \Delta(S)$.
            \item $r = 2$ and $s > 2$. It follows from \Cref{cartesian_path_path} that incidences of graph $H_1$ can be colored using colors $\{0,\allowbreak \pm 1\}$. It follows from \Cref{tensor_path_path} that incidences of $H_2$ can be colored using colors $\pm 1$, so they can also be colored using $\pm 2$. It follows that graph $S$ can be colored using colors $\{0,\allowbreak \pm 1,\allowbreak \pm 2\}$, so $\chi'(S) = 5 = \Delta(S)$.
            \item $r > 2$ and $s > 2$. We observe that $\Delta(H_1) = \Delta(H_2) = 4$. It follows from \Cref{cartesian_path_path} that $\chi'(H_1) = \Delta(H_1)$ and from \Cref{tensor_path_path} that $\chi'(H_2) = \Delta(H_2)$. It follows from \Cref{strong_lemma} that $\chi'(S) = \Delta(S)$.
        \end{enumerate}
    \end{proof}

\section{Corona products}\label{section_corona}

    Intuitively, a corona product of graphs $G_1$, $G_2$ consists of graph $G_1$, $n(G_1)$ copies of graph $G_2$ and edges connecting the $i$-th vertex of $G_1$ with all the vertices of the $i$-th copy of graph $G_2$ (see an example in \Cref{fig:corona}).
    More formally, the corona product of graphs $G_1$, $G_2$ is a graph $G = G_1 \odot G_2$, such that $V(G) = V(G_1) \cup \bigcup_{i=1}^{n(G_1)} V(G_2^i)$, where $G_2^i$ denotes an $i$th copy of graph $G_2$, and $E(G) = E(G_1) \cup \bigcup_{i=1}^{n(G_1)} E(G_2^i) \cup \{(u, v) \colon u \in V(G_1), v \in V(G_2^i)$, and $u$ is the $i$-th vertex of $G_1 \}$.

    In this section, we consider the edge coloring problem of signed corona products of graphs.
    It turns out that the signature $\sigma$ of a signed corona product $S = (G_1 \odot G_2,\allowbreak \sigma)$ does not matter for the chromatic index of $S$ as far as $\Delta(G_1) \geq 2$.

    \begin{figure}
        \centering
        \begin{tikzpicture}[thick, scale=1.0]
            \node[vertex] (0) at (0, 0) {};
            \node[vertex] (1) at (-1, 2) {};
            \node[vertex] (2) at (0, 3) {};
            \node[vertex] (3) at (1, 2) {};

            \node[vertex] (4) at (3, 0) {};
            \node[vertex] (5) at (2, 2) {};
            \node[vertex] (6) at (3, 3) {};
            \node[vertex] (7) at (4, 2) {};
            
            \node[vertex] (8) at (6, 0) {};
            \node[vertex] (9) at (5, 2) {};
            \node[vertex] (10) at (6, 3) {};
            \node[vertex] (11) at (7, 2) {};
            
            \node[vertex] (12) at (9, 0) {};
            \node[vertex] (13) at (8, 2) {};
            \node[vertex] (14) at (9, 3) {};
            \node[vertex] (15) at (10, 2) {};

            \foreach \source / \dest in
                {1/2,2/3,3/1,
                 5/6,6/7,7/5,
                 9/10,10/11,11/9,
                 13/14,14/15,15/13}
                    \path [edge] (\source) -- (\dest);
            \foreach \source / \dest in
                {0/4,4/8,8/12}
                    \path [edge] (\source) -- (\dest);
            \foreach \source / \dest in
                {0/1,0/2,0/3,
                 4/5,4/6,4/7,
                 8/9,8/10,8/11,
                 12/13,12/14,12/15}
                    \path [dashed-edge] (\source) -- (\dest);

        \end{tikzpicture}
        \caption{An example corona product $P_4 \odot K_3$. Edges of $P_4$ and copies of $K_3$ are marked with solid lines. Edges connecting them are marked with dashed lines.}
        \label{fig:corona}
    \end{figure}
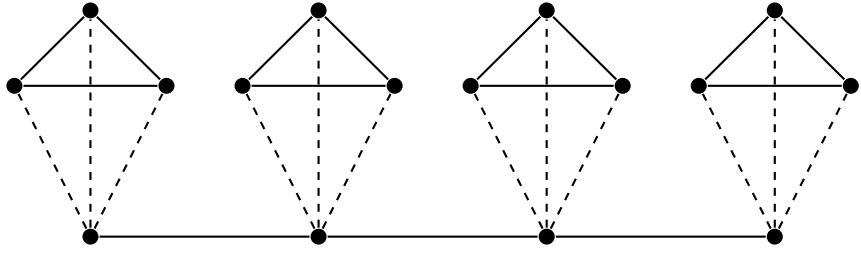

    \begin{theorem}\label{th_corona}
        Let $S = (G_1 \odot G_2,\allowbreak \sigma)$ be a signed corona product of graphs $G_1$ and $G_2$. If $\Delta(G_1) \geq 2$, then $\chi'(S) = \Delta(S)$.
    \end{theorem}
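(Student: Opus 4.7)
The plan is to construct an explicit proper $\Delta(S)$-edge-coloring of $S$ with the palette $M_{\Delta(S)}$, working on the three natural pieces of the decomposition of $E(S)$: the edges of $G_1$, the $n(G_1)$ vertex-disjoint stars (one $K_{1,n(G_2)}$ at each $v_i$), and the $n(G_1)$ vertex-disjoint copies $G_2^i$. First, I would compute the relevant degrees: at $v_i$ one has $\deg_S(v_i)=\deg_{G_1}(v_i)+n(G_2)$, and at $u_i^j$ one has $\deg_S(u_i^j)=\deg_{G_2}(u_i^j)+1\le n(G_2)$. Combining this with $\Delta(G_1)\ge 2$ yields $\Delta(S)=\Delta(G_1)+n(G_2)$, with the maximum attained at a maximum-degree vertex of $G_1$. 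Next, I would switch each $u_i^j$ for which $\sigma(v_i u_i^j)=-1$; since switching preserves $\chi'$, touches only edges incident to $u_i^j$ (i.e.\ the star edge $v_iu_i^j$ and the edges of $G_2^i$ at $u_i^j$), and leaves the signs within $G_1$ untouched, we may assume from now on that every star edge is positive, with each $G_2^i$ replaced by a switching-equivalent signed graph.

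With this normalization, I would color $G_1$ together with the stars simultaneously. Behr's theorem gives a proper edge coloring $c_1$ of $G_1$ with at most $\Delta(G_1)+1$ colors from $M_{\Delta(S)}$. At every $v_i$ the coloring $c_1$ occupies at most $\Delta(G_1)$ colors, leaving at least $\Delta(S)-\Delta(G_1)=n(G_2)$ unused colors at $v_i$; assigning the $n(G_2)$ star edges at $v_i$ to any bijection with these unused colors (on the $v_i$ side, with the $u_i^j$ side then determined by the positivity of the star edge) extends $c_1$ to a proper coloring of $G_1$ together with all star edges. In particular, each vertex $u_i^j$ now carries exactly one prescribed color from its incident star edge.

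It remains to color each $G_2^i$ while forbidding that prescribed color at each $u_i^j$. The edges of $G_2^i$ at $u_i^j$ number at most $n(G_2)-1$, and the palette leaves $\Delta(S)-1\ge n(G_2)+1$ colors available at $u_i^j$, so numerically there is ample room. The main obstacle I anticipate is exactly this final step: on its face it is a list edge coloring of $G_2^i$ with one forbidden color per vertex, a form of the open list edge coloring conjecture. I would sidestep this by refining the star assignment in the previous paragraph so that the forbidden color at each $u_i^j$ lies inside a designated ``star block'' of the palette; then each $G_2^i$ can be colored from the complementary ``free block'' of size $\Delta(G_1)$ whenever $\Delta(G_2)+1\le\Delta(G_1)$, directly by Behr's theorem. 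For the remaining case I would use a Vizing-fan / Kempe-swap recoloring on the union of $G_2^i$ with its star, exploiting the slack provided by $\Delta(G_1)\ge 2$ (which guarantees at least two ``spare'' palette colors at every $u_i^j$) to shift colors along an alternating chain until the forbidden color is avoided. Coordinating these local recolorings across all the copies is where I expect the real combinatorial work of the proof to concentrate.
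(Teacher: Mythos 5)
Your reduction to the last step is where the argument breaks. After coloring $G_1$ and the stars, you are left with the task of properly edge-coloring each $G_2^i$ so that at every vertex $u_i^j$ one prescribed color (the one carried by its star edge) is avoided. As you yourself note, this is an instance of list edge coloring with lists of the form ``palette minus one color,'' and no known theorem delivers it in general. Your proposed escape only covers the case $\Delta(G_2)+1\le\Delta(G_1)$: since the star edges at $v_i$ occupy $n(G_2)$ distinct colors, a ``free block'' disjoint from all of them has size exactly $\Delta(S)-n(G_2)=\Delta(G_1)$, so already for $G_1=C_3$ and $G_2=K_m$ with $m\ge 4$ the free block is too small and you are thrown back on the unresolved case. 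The fallback ``Vizing-fan / Kempe-swap recoloring'' is not an argument here: Kempe-chain exchanges for signed edge colorings must respect the constraint $f(u\text{:}uv)=-\sigma(uv)f(v\text{:}uv)$, which couples the two colors of a pair $\pm k$, and you give no mechanism for terminating the chains or for keeping the swaps in one copy from undoing the star assignment. You explicitly concede that ``the real combinatorial work'' is concentrated there, which is an accurate self-assessment: the proof is not complete.

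The way around this obstacle, which the paper takes, is not to separate the star from $G_2^i$ at all. Instead one colors the join $S_{2,i}+v_i$ (the copy $G_2^i$ together with $v_i$ and all its star edges) as a single signed graph. Its maximum degree is $n(G_2)$, attained at $v_i$, so Behr's theorem gives a proper coloring with $n(G_2)+1\le\Delta(S)$ colors in which the star and the copy are already mutually consistent --- the list-coloring problem never arises. The only remaining issue is that the colors appearing at $v_i$ in this local coloring may clash with those already used at $v_i$ by the coloring of $G_1$; this is repaired by permuting whole color pairs $\pm k\mapsto\pm k'$ within $M_{\Delta(S)}$, an operation that preserves properness of a signed edge coloring. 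A degree count at $v_i$ ($\deg_S(v_i)\le\Delta(S)=|M_{\Delta(S)}|$) shows enough disjoint pairs are available, and the actual case analysis in the paper is devoted to the bookkeeping for color $0$ and for colors that appear at $v_i$ without their negatives. If you want to salvage your write-up, replacing your third stage by this ``color the join, then permute pairs'' device is the missing idea.
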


    \begin{proof}[Proof of theorem \ref{th_corona}]
        First, we note that $\Delta(S) = \Delta(G_1) + n(G_2)$.

        Let $v_1, v_2, \ldots, v_{n(G_1)}$ be any sequence of vertices in $G_1$ and $G_{2,1},\allowbreak G_{2,2}, \ldots,\allowbreak G_{2, n(G_1)}$ be a sequence of copies of graph $G_2$ such that $v_i$ is the only vertex connecting graph $G_1$ with graph $G_{2,i}$.
        
        By $S_1$ we denote a subgraph of $S$ that has all the vertices and edges of $G_1$ and corresponding edge signs from $\sigma$.
        Analogously, $S_{2,i}$ corresponds to $G_{2,i}$ with edge signs from $\sigma$. 

        Our main idea is to start from some edge coloring $c_0$ of $S_1$ with colors from $M_{\Delta(S)}$, and then to proceed inductively from $c_{i - 1}$ to $c_i$ ($i = 1, \ldots, n(S_1)$) by coloring a graph induced by $v_i$ and $S_{2, i}$ while preserving the invariant that we use only colors from $M_{\Delta(S)}$.
        Since we will show that such coloring can be always found, from now on without loss of generality we may assume that $G_1$ is a regular graph.

        Clearly, for $c_0$ the invariant holds, since first we can find an edge coloring $c^\otimes$ of $S_1$ using $\Delta(S_1) + 1 \le \Delta(S_1) + n(G_2) = \Delta(S)$ colors.
        Now, we have two possible cases:
        \begin{itemize}
            \item either $M_{\Delta(S_1) + 1} \subseteq M_{\Delta(S)}$, so we set $c_0$ as $c^\otimes$,
            \item or $M_{\Delta(S_1) + 1} \nsubseteq M_{\Delta(S)}$, thus $0 \in M_{\Delta(S_1) + 1}$, but $0 \notin M_{\Delta(S)}$ -- and then we define
            \begin{align*}
                c_0(v \text{:} e) =
                \begin{cases}
                    \pm \left\lfloor\frac{\Delta(S)}{2}\right\rfloor & \text{if $c^\otimes(v\text{:}e) = 0$,} \\
                    c^\otimes(v \text{:} e) & \text{otherwise.}
                \end{cases}
            \end{align*}
            Note that if $0 \notin M_{\Delta(S)}$, then $\Delta(S)$ is even and the resulting coloring is correct, since $\pm \frac{\Delta(S)}{2} \notin M_{\Delta(S_1) + 1}$, and we can always consistently assign signs in the formula above, as $0$-edges always form a matching so any alignment of signs consistent with $\sigma$ suffices.
        \end{itemize}
        
        Now, let us also fix $c^*_i$ as an edge coloring of $S_{2, i} + v_i$ with $n(G_2) + 1 = \Delta(S_{2,i} + v_i) + 1$ colors.
        Since $n(G_2) + 1 < \Delta(S_1) + n(G_2) = \Delta(S)$, we can transform this coloring to $c'_i$ as following:
        \begin{itemize}
            \item either $M_{n(G_2) + 1} \subseteq M_{\Delta(S)}$, so we set $c'_i$ as $c^*_i$,
            \item or $M_{n(G_2) + 1} \nsubseteq M_{\Delta(S)}$, thus $0 \in M_{n(G_2) + 1}$, but $0 \notin M_{\Delta(S)}$ -- and then we define
            \begin{align*}
                c'_i(v \text{:} e) =
                \begin{cases}
                    \pm \left\lfloor\frac{\Delta(S)}{2}\right\rfloor & \text{if $c^*_i(v\text{:}e) = 0$,} \\
                    c^*(v \text{:} e) & \text{otherwise.}
                \end{cases}
            \end{align*}
            As with $c^\otimes$ and $c_0$ above, since $0 \notin M_{\Delta(S)}$, we know that $\Delta(S)$ is even and the resulting coloring is correct, since $\pm \left\lfloor\frac{\Delta(S)}{2}\right\rfloor \notin M_{n(G_2) + 1}$.
        \end{itemize}

        Observe that any $c'_i$ has the following property $(I)$:
        \begin{itemize}
            \item if $0 \in M_{\Delta(S)}$, then there is at most one $k$ such that there is an incidence of $v_i$ with color $k$ and no incidence of $v_i$ with color $-k$,
            \item if $0 \notin M_{\Delta(S)}$, then there are at most two such values of $k$ -- and for one of them $\pm k$ induces a matching in $G_2$.
        \end{itemize}
        The last part stems from the fact that the second missing color can arise only when $M_{n(G_2) + 1} \nsubseteq M_{\Delta(S)}$ by using $\pm \left\lfloor\frac{\Delta(S)}{2}\right\rfloor$ in place of $0$.
        Note that we have $\deg_{S_1}(v_i) = \Delta(S_1)$. Therefore, by our construction of $c_0$ the property $(I)$ holds for all vertices $v_i$. 

        For any $i > 0$ we just need to construct $c_i$ as a union of $c_{i - 1}$ and a proper recoloring of $c'_i$ ensuring that all pairs of colors from $c'_i$ are recolored to all $\pm k \in M_{\Delta(S)}$ (including a ``fake pair'' $0 \in M_{\Delta(S)}$ if necessary) not used by incidences already colored in $c_{i - 1}$. Moreover, we have to ensure that these colors are used for both the incidences of $v_i$ without introducing any conflicts with $c_{i - 1}$.

        \textbf{Case 1:} First, let us consider the case that $0 \in M_{\Delta(S)}$. From the property $(I)$ we know that there exists at most one value $l \neq 0$ such that one of the incidences of $v_i$ in $c_{i - 1}$ has color $l$, but none of them has color $-l$, and similarly for $c'_i$.
        
        \textbf{Case 1a:} If $\Delta(S_1)$ is even, then $\Delta(S_{2, i} + v_i)$ has to be odd. Thus, $c^*_i$ and $c'_i$ by construction do not use color $0$ at all. However, this means that the incidences of $v_i$ in $c'_i$ have exactly colors $\pm k'_1$, \ldots, $\pm k'_{t'}$, and $l'$ (i.e. there is one color without pair at $v_i$ in $c'_i$) with all $k'_j \neq 0$.

        \textbf{Case 1a':} Suppose now that the incidences of $v_i$ in $c_{i - 1}$ have colors $\pm k_1$, \ldots, $\pm k_t$, $l$. Observe that since $\Delta(S_1)$ is even, there is some $j = 1, \ldots, t$ such that $k_j = 0$.
        Let us map (a) colors $\pm l'$ from $c'_i$ to $\mp l$ (reverting the signs) and (b) all colors $\pm k'_s$ from $c'_i$ for $s = 1, \ldots, t'$ to some pairs from $M_{\Delta(S)}$ different than all $\pm k_1$, \ldots, $\pm k_t$ and $\pm l$.

        \textbf{Case 1a'':} If, on the other hand, the incidences of $v_i$ in $c_{i - 1}$ have colors $\pm k_1$, \ldots, $\pm k_t$, then all such pairs of colors have to be non-zero. Moreover, $\Delta(S_1) \ge 2$ ensures that $t \ge 1$. Thus, it is sufficient to map
        \begin{itemize}
            \item colors $\pm l'$ from $c'_i$ to $\pm k_1$ for all edges not incident to $v_i$,
            \item the incidence of $v_i$ with color $l'$ in $c'_i$ to $0$,
            \item all colors $\pm k'_s$ from $c'_i$ for $s = 1, \ldots, t'$ to some pairs from $M_{\Delta(S)}$ different than all $\pm k_1$, \ldots, $\pm k_t$.
        \end{itemize}
        
        \textbf{Case 1b:} When $\Delta(S_1)$ is odd, then $c_0$ by construction does not use color $0$ at all, therefore all incidences of $v_i$ in $c_{i - 1}$ have non-zero colors -- in particular, they have colors $\pm k_1$, \ldots, $\pm k_t$, $l$ with all $k_j \neq 0$.

        \textbf{Case 1b':} Suppose that the incidences of $v_i$ in $c'_i$ have colors $\pm k'_1$, \ldots, $\pm k'_{t'}$, $l'$. Then, since $\Delta(S_{2, i} + v_i)$ is even, it has to be the case that $k'_j = 0$ for some $j = 1, \ldots, t'$. In this case, we proceed as in the case 1a', additionally trivially copying incidences with color $0$ from $c'_i$ to $c_i$.

        \textbf{Case 1b'':} If, on the other hand, the incidences of $c'_i$ have only colors $\pm k'_1$, \ldots, $\pm k'_{t'}$, then $k'_j \neq 0$ for all $j = 1, \ldots, t'$. Then we map
        \begin{itemize}
            \item colors $\pm k'_1$ from $c'_i$ to $\pm k_1$ for all edges not incident to $v_i$,
            \item the incidence of $v_i$ with colors $\pm k'_1$ in $c'_i$ to $0$ and $-l$, respectively,
            \item all possible incidences with color $0$ in $c'_i$ to $\pm l$ so that they do not conflict with the one of incidences above -- note that $0$ might be only possibly used in $S_{2, i} + v_i$, but not at $v_i$ in $c'_i$,
            \item all colors $\pm k'_s$ from $c'_i$ for $s = 2, \ldots, t'$ to some pairs from $M_{\Delta(S)}$ different than all $\pm k_1$, \ldots, $\pm k_t$ and $\pm l$.
        \end{itemize}
        A simple degree count for $v_i$ shows that such recolorings are always possible: the number of colors used at $v_i$ by both $c_{i - 1}$ and $c'_i$ has to be equal to $\deg_S(v_i) = \Delta(S) = |M_{\Delta(S)}|$ -- and it is easy to check that in each case $c_i$ uses exactly the colors from $M_{\Delta(S)}$.
        Moreover, it can be verified that we map all the colors from $c'_i$ (i.e. all appearing at $v_i$ plus one other) and that the resulting $c_i$ is a proper edge coloring of a signed graph since the property $(I)$ ensures that $l, l' \neq 0$.

        Note that in general for $k'_j = 0$, it is always possible to map it to a pair of values $\pm k_i$ for some $k_i \neq 0$. And the construction ensures that we would never try doing the reverse, that is, mapping a true pair of colors to $0$.

        \textbf{Case 2:} for $0 \notin M_{\Delta(S)}$ suppose that the incidences of $v_i$ in $c_{i - 1}$ have colors $\pm k_1$, \ldots, $\pm k_t$, $l_1$, $l_2$. Since there is no unusual, one-element pair $\pm 0$ and $\deg_S(v_i) = \Delta(S) = |M_{\Delta(S)}|$ is even, it holds that either the incidences of $v_i$ in $c'_i$ have colors $\pm k'_1$, \ldots, $\pm k'_{t'}$, $l'_1$, $l'_2$ or just $\pm k'_1$, \ldots, $\pm k'_{t'}$.
        In the first case, we map $\pm l'_1$ and $\pm l'_2$ from $c'_i$ to $\mp l_1$ and $\mp l_2$ (reverting the signs), respectively, and $\pm k'_j$ from $c'_i$ to available pairs as above.
        In the second case $t' \ge 1$, so we recolor $\pm k'_1$ to $\{-l_1, -l_2\}$ and all other $k'_s$ as before.
        
        Similarly, when there are only $l_1$ and $l'_1$, but not $l_2$ or $l'_2$ it is sufficient to map $\pm l'_1$ to $\mp l_1$ (reverting the signs).

        And finally, the case when there is no $l_1$ or $l_2$:
        \begin{itemize}
            \item if there is $l'_1$ and $l'_2$, then according to the property $(I)$ $\pm l'_2$ induces a matching in $S_{2, i} + v_i$. We can map colors $\pm l'_1$ from $c'_i$ for all edges not incident to $v_i$ to $\pm k_1$ and map all $\pm k'_s$ from $c'_i$ to some available pairs, other than all $\pm k_j$ ($j = 1, \ldots, t$). Finally, we have a matching plus a single edge, that is, an acyclic graph, so we can safely map it to the remaining available pair.
            \item if there is neither $l'_1$ nor $l'_2$, then we just map $\pm k'_s$ to available pairs, other than all $\pm k_j$  ($j = 1, \ldots, t$).
        \end{itemize}

    Therefore, in every case it is possible to obtain $c_i$ from $c_{i - 1}$ which directly implies that the required $\Delta(S)$-edge coloring of the whole signed corona product graph $S$ always exists.
    \end{proof}

    We also observe that a corona product of graphs $G_1$, $G_2$ with $\Delta(G_1) \le 1$ is either a subgraph of $(1)$ a collection of disjoint cliques (when $G_1$ has no edges) or $(2)$ a collection of cliques, either disjoint, or connected in pairs with a single edge (when $G_1$ is a matching). However, these two problems are still open:
    \begin{con}
        Given a signed complete graph $S = (K_n, \sigma)$, show that $\chi'(S) = \Delta(S) = n - 1$.
    \end{con}

    \begin{con}
        Given a signed graph consisting of two identical cliques connected by a single edge $S = ((K_n \cup K_n) + e, \sigma)$, show that $\chi'(S) = \Delta(S) = n$.
    \end{con}

    Note that in the latter if all the edge signs were consistent on both copies of $K_n$, then the conjecture would hold. It would be sufficient to find $c'$ as $n$-edge colorings of both copies of $K_n$, such that $\alpha$ is a missing color both for $v_1$ in $c'$ applied for the first $K_n$ and for $v_2$ in $c'$ applied for the second $K_n$ for the joining edge $e = v_1 v_2$. Then it is enough to return either a join of $c'$ for both $K_n$ with an appropriate color on both endpoints of $e$ (if the sign of $e$ is positive), or a join of $c'$, $-c'$ for each $K_n$ and a proper coloring of $e$ (if the sign of $e$ is negative).

\end{document}